\newcommand{\Ball}{\mathbb{B}}
\newcommand{\COMMENT}[1]{}
\newcommand{\CH}{\mathop{\mathrm{co}}}
\newcommand{\CCH}{\mathop{\overline{\mathrm{co}}}}
\renewcommand{\d}{\mathrm{d}}
\newcommand{\eps}{\varepsilon}
\newcommand{\Exp}{\mathbb{E}}
\renewcommand{\H}{\mathbb{H}}
\newcommand{\Med}{\mathbb{M}}
\newcommand{\N}{\mathbb{N}}
\newcommand{\one}{\mathbbm{1}}
\renewcommand{\P}{\mathbb{P}}
\newcommand{\R}{\mathbb{R}}
\newcommand{\smid}{\,\middle|\,}
\newtheorem{thm}{Theorem}[section]
\newtheorem{prop}[thm]{Proposition}
\newtheorem{lem}[thm]{Lemma}
\theoremstyle{definition}
\newtheorem{defn}[thm]{Definition}
\newtheorem{rmk}[thm]{Remark}
\newtheorem{eg}[thm]{Example}
\title[Concentration inequalities for linear and non-linear functions]{
	Equivalence of concentration inequalities for linear and non-linear functions
}
\author{T.\ J.\ Sullivan}
\address{
	Graduate Aerospace Laboratories \\
	California Institute of Technology \\
	Mail Code 205-45 \\
	1200 East California Boulevard \\
	Pasadena \\
	CA 91125 \\
	United States of America
}
\email{tjs@caltech.edu}
\urladdr{\url{http://www.its.caltech.edu/~tjs/}}
\author{H.\ Owhadi}
\address{
	Applied \& Computational Mathematics and Control \& Dynamical Systems \\
	California Institute of Technology \\
	Mail Code 217-50 \\
	1200 East California Boulevard \\
	Pasadena \\
	CA 91125 \\
	United States of America
}
\email{owhadi@caltech.edu}
\urladdr{\url{http://www.acm.caltech.edu/~owhadi/}}
\date{\today}
\keywords{concentration of measure, large deviations, quasiconvexity, normal distance}
\subjclass[2010]{60E15, 60F10, 52A07}
\thanks{The authors acknowledge portions of this work supported by the United States Department of Energy National Nuclear Security Administration under Award Number DE-FC52-08NA28613 through the California Institute of Technology's ASC/PSAAP Center for the Predictive Modeling and Simulation of High Energy Density Dynamic Response of Materials.}
\begin{document}

\begin{abstract}
	We consider a random variable $X$ that takes values in a (possibly infinite-dimensional) topological vector space $\mathcal{X}$.  We show that, with respect to an appropriate ``normal distance'' on $\mathcal{X}$, concentration inequalities for linear and non-linear functions of $X$ are equivalent.  This normal distance corresponds naturally to the concentration rate in classical concentration results such as Gaussian concentration and concentration on the Euclidean and Hamming cubes.  Under suitable assumptions on the roundness of the sets of interest, the concentration inequalities so obtained are asymptotically optimal in the high-dimensional limit.
\end{abstract}

\maketitle

\section{Introduction}
\label{sec:intro}

It is by now almost classical that smooth enough convex functions enjoy good concentration properties;  see \emph{e.g.}\ \cite{Ledoux:2001} \cite{Lugosi:2009} \cite{McDiarmid:1998} \cite{MilmanSchechtman:1986} for surveys of the literature.  It is also known that convexity can be neglected in the Gaussian case and that the smoothness assumptions are not essential and can be replaced, for instance, with bounded martingale differences;  see \emph{e.g.}\ \cite{McDiarmid:1989} \cite{McDiarmid:1997} and also \cite{Vu:2002}.

A common feature of many concentration results is that an appropriate notion of distance is needed, \emph{e.g.}\ Talagrand's convex distance \cite{Talagrand:1995}.  In this paper, a notion of ``normal distance'' on a topological vector space $\mathcal{X}$ is introduced through a technique commonly used in large deviations theory, Chernoff bounding, \emph{i.e.}\ estimating the measure of a set by using a containing half-space.  Although simple, this method leads to a notion of distance that is in some sense ``natural'' with respect to the duality structure on $\mathcal{X}$.  Remarkably, with respect to this distance, concentration inequalities on the tails of linear, convex, quasiconvex and non-linear functions on $\mathcal{X}$ are mutually equivalent.

Concentration of measure is based on a simple but non-trivial observation originally due to L{\'e}vy \cite{Levy:1951}:  in a high-dimensional probability space, ``nearly all'' the probability mass lies close to any set with measure at least $\frac{1}{2}$;  put another way, functions of many independent variables with small sensitivity to each individual input are very nearly constant.  A typical concentration inequality is of the form
\begin{equation}
	\label{eq:deviation_ineq}
	\P[ | f(X) - m | \geq r ] \leq C_{1} \exp ( - C_{2} r^{2} ),
\end{equation}
where $f$ is a suitably well-behaved function, $X$ is a random variable such that the push-forward measure $(f \circ X)_{\ast} \P$ has some concentration property, and $m$ is either the mean value $\Exp[f(X)]$ or median value $\Med[f(X)]$;  sometimes the control is one-sided, and the absolute value in \eqref{eq:deviation_ineq} is omitted.  A notable feature of this paper is that it provides concentration inequalities with $m = f(\Exp[X])$.

The key property of the normal distance of this paper is contained in the following \emph{portmanteau theorem} for the equivalence of various concentration inequalities with respect to normal distance:

\begin{thm}
	\label{thm:portmanteau}
	Let $\mathcal{X}$ be a real topological vector space and $\mathcal{X}^{\ast}$ its continuous dual space.  Let $\Psi \colon \mathcal{X}^{\ast} \to [0, +\infty]$ be positively homogeneous of degree one.  Define the \emph{$\Psi$-normal distance} from $x \in \mathcal{X}$ to $A \subseteq \mathcal{X}$ by
	\[
		d_{\perp, \Psi}(x, A) := \sup \left\{ \frac{\langle \nu, x - p \rangle_{+}}{\Psi(\nu)} \smid \begin{matrix} p \in \mathcal{X} \text{ and } \nu \in \mathcal{X}^{\ast} \text{such that,} \\ \text{for all $a \in A$, } \langle \nu, a \rangle \leq \langle \nu, p \rangle \end{matrix} \right\},
	\]
	with the convention that $0 / 0 = 0$.  Then the following statements about any random variable $X$ that takes values in $\mathcal{X}$ are equivalent:
	\begin{compactenum}[(i)]
		\item \label{port-1} for every closed half-space $\H_{p, \nu} := \{ x \in \mathcal{X} \mid \langle \nu, x - p \rangle \leq 0 \} \subseteq \mathcal{X}$, where $p \in \mathcal{X}$ and $\nu \in \mathcal{X}^{\ast}$,
		\[
			\P[X \in \H_{p, \nu}] \leq \exp \left( - \frac{d_{\perp, \Psi}(\Exp[X], \H_{p, \nu})^{2}}{2} \right);
		\]		
		\item \label{port-2} for every convex set $K \subseteq \mathcal{X}$,
		\[
			\P[X \in K] \leq \exp \left( - \frac{d_{\perp, \Psi}(\Exp[X], K)^{2}}{2} \right);
		\]
		\item \label{port-3} for every measurable $A \subseteq \mathcal{X}$,
		\[
			\P[X \in A] \leq \exp \left( - \frac{d_{\perp, \Psi}(\Exp[X], A)^{2}}{2} \right);
		\]
		\item \label{port-4} for every measurable $f \colon \mathcal{X} \to \R \cup \{ \pm \infty \}$ and every $\theta \in \R \cup \{ \pm \infty \}$,
		\[
			\P[f(X) \leq \theta] \leq \exp \left( - \frac{d_{\perp, \Psi} (\Exp[X], f^{-1}([-\infty, \theta]))^{2}}{2} \right);
		\]
		\item \label{port-5} for every quasiconvex $f \colon \mathcal{X} \to \R \cup \{ \pm \infty \}$ and every $\theta \in \R \cup \{ \pm \infty \}$,
		\[
			\P[f(X) \leq \theta] \leq \exp \left( - \frac{d_{\perp, \Psi}(\Exp[X], f^{-1}([-\infty, \theta]))^{2}}{2} \right).
		\]
	\end{compactenum}
\end{thm}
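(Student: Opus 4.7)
\bigskip
\noindent\textbf{Proof plan.}
The plan is to close the circle
\[
	(\ref{port-5}) \Rightarrow (\ref{port-1}) \Rightarrow (\ref{port-3}) \Rightarrow (\ref{port-4}) \Rightarrow (\ref{port-5}),
\]
and to note separately that $(\ref{port-3}) \Rightarrow (\ref{port-2}) \Rightarrow (\ref{port-1})$ are immediate, since convex sets (for which $\P[X \in K]$ is defined) are a subclass of measurable sets and closed half-spaces are convex.

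For $(\ref{port-5}) \Rightarrow (\ref{port-1})$, I would apply the quasiconvex bound to the linear functional $f(x) := \langle \nu, x \rangle$ with threshold $\theta := \langle \nu, p \rangle$. Every linear functional is quasiconvex, and $f^{-1}([-\infty, \theta]) = \H_{p, \nu}$ by definition, so (v) collapses immediately to (i).

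The substance of the theorem lies in $(\ref{port-1}) \Rightarrow (\ref{port-3})$. Given a measurable $A \subseteq \mathcal{X}$, I would fix any $(p, \nu)$ with $A \subseteq \H_{p, \nu}$, and combine monotonicity with (i) to obtain
\[
	\P[X \in A] \leq \P[X \in \H_{p, \nu}] \leq \exp \left( - \tfrac{1}{2} d_{\perp, \Psi}(\Exp[X], \H_{p, \nu})^{2} \right).
\]
The key observation is that $(p, \nu)$ is itself an admissible pair inside the supremum defining $d_{\perp, \Psi}(\Exp[X], \H_{p, \nu})$, so
\[
	d_{\perp, \Psi}(\Exp[X], \H_{p, \nu}) \geq \frac{\langle \nu, \Exp[X] - p \rangle_{+}}{\Psi(\nu)}.
\]
Substituting this lower bound into the exponent and then taking the supremum of the right-hand side over all pairs $(p, \nu)$ with $A \subseteq \H_{p, \nu}$ reproduces, by the very definition of $d_{\perp, \Psi}$, the quantity $d_{\perp, \Psi}(\Exp[X], A)$, yielding (iii).

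The remaining steps are clean: $(\ref{port-3}) \Rightarrow (\ref{port-4})$ is just (iii) applied to $A := f^{-1}([-\infty, \theta])$, which is measurable by measurability of $f$; and $(\ref{port-4}) \Rightarrow (\ref{port-5})$ is tautological, since a quasiconvex $f$ whose sublevel sets are assumed measurable (needed for $\P[f(X) \leq \theta]$ to make sense) is a special case of a measurable function. I do not foresee any genuine obstacle: the entire circle is engineered by the choice of $d_{\perp, \Psi}$ as a supremum over containing half-spaces, which is exactly what lets Chernoff-type bounds on half-spaces propagate to arbitrary sets and sublevel sets. The only real subtlety worth a remark is the measurability of sublevel sets of quasiconvex functions on an infinite-dimensional $\mathcal{X}$; this is implicit in the statement rather than something to be proved.
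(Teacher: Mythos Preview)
Your proposal is correct and follows essentially the same approach as the paper. The paper closes the cycle in the order $(\ref{port-1}) \Rightarrow (\ref{port-2}) \Rightarrow (\ref{port-3}) \Rightarrow (\ref{port-4}) \Rightarrow (\ref{port-5}) \Rightarrow (\ref{port-1})$, using the convex-hull identity $d_{\perp,\Psi}(x,A)=d_{\perp,\Psi}(x,\CCH(A))$ for $(\ref{port-2})\Rightarrow(\ref{port-3})$ and the characteristic function $f=\chi_{\H_{p,\nu}}$ with $\theta=1$ for $(\ref{port-5})\Rightarrow(\ref{port-1})$; your direct passage $(\ref{port-1})\Rightarrow(\ref{port-3})$ and your choice $f(x)=\langle\nu,x\rangle$ are equally valid cosmetic variants of the same argument.
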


Note that if $f$ is quasilinear (\emph{i.e.}\ both $f$ and $-f$ are quasiconvex), then formulation (\ref{port-5}) yields concentration inequalities for both the lower and upper tails of $f(X)$.

The notation and setting of the paper are covered in section \ref{sec:notation_and_background}, along with a review of some definitions and results from the concentration-of-measure literature.  Normal distance is defined and its properties (including theorem \ref{thm:portmanteau}) are examined in section \ref{sec:normal_distance}.  In section \ref{sec:main}, the normalizing function $\Psi$ is determined explicitly in several cases, thereby connecting theorem \ref{thm:portmanteau} with classical concentration results. In particular, proposition \ref{prop:emp_mean} identifies the normal distance that corresponds to the concentration of a vector, the entries of which are the empirical (sampled) means of functions of independent random variables.  In section \ref{sec:asymptotics}, it is shown that the inequality in theorem \ref{thm:portmanteau}(\ref{port-3}) is asymptotically sharp (in the sense used in large deviations theory) in the high-dimensional limit, provided that $A$ is convex and ``sufficiently round'' at those points of $A$ that are closest to the center of mass $\Exp[X]$.  Finally, for completeness, the method of Chernoff bounds and its consequences for convex sets are reviewed in an appendix (section \ref{sec:Chernoff}).

\section{Notation and Background}
\label{sec:notation_and_background}

Let $\mathcal{X}$ be a real topological vector space.  Let $\mathcal{X}^{\ast}$ denote the continuous dual space of $\mathcal{X}$ and let $\langle \ell, x \rangle$ denote the dual pairing between $\ell \in \mathcal{X}^{\ast}$ and $x \in \mathcal{X}$;  $\langle v, \ell \rangle$ will also denote the dual pairing between $v \in \mathcal{X}^{\ast \ast}$ and $\ell \in \mathcal{X}^{\ast}$.  It is not strictly necessary to assume that $\mathcal{X}$ is locally convex, but the results of this paper may be trivially true if $\mathcal{X}^{\ast}$ does not contain enough linear functionals.

\subsection{Half-Spaces}  

Given $p \in \mathcal{X}$ and $\nu \in \mathcal{X}^{\ast}$, $\H_{p, \nu}$ will denote the closed half-space of $\mathcal{X}$ that has $p$ in its frontier and outward-pointing normal $\nu$, \emph{i.e.}
\begin{equation}
	\label{eq:halfspace}
	\H_{p, \nu} := \left\{ x \in \mathcal{X} \smid \langle \nu, x \rangle \leq \langle \nu, p \rangle \right\}.
\end{equation}
Note well the degenerate case $\H_{p, 0} = \mathcal{X}$.  Every $(p, \nu) \in \mathcal{X} \times \mathcal{X}^{\ast}$ defines a unique closed half-space of $\mathcal{X}$, whereas a given closed half-space can have multiple distinct representations:  $\H_{p, \nu} = \H_{p', \nu'}$ if, and only if, $\nu$ is a positive multiple of $\nu'$ and $\langle \nu, p - p' \rangle = \langle \nu', p - p' \rangle = 0$.

\subsection{Convex Sets and Cones}  

The closed convex hull of $A \subseteq \mathcal{X}$ will be denoted by $\CCH(A)$.  Given a closed convex set $K \subseteq \mathcal{X}$ and $p \in K$, $\mathrm{N}_{p}^{\ast} K$ denotes the \emph{outward normal cone} to $K$ at $p$, and $\mathrm{N}^{\ast} K$ denotes the \emph{outward normal bundle} of $K$:
\begin{equation}
	\label{eq:normal_cone}
	\mathrm{N}_{p}^{\ast} K := \left\{ \nu \in \mathcal{X}^{\ast} \smid K \subseteq \H_{p, \nu} \right\},
\end{equation}
\begin{equation}
	\label{eq:normal_bundle}
	\mathrm{N}^{\ast} K := \left\{ (p, \nu) \in \mathcal{X} \times \mathcal{X}^{\ast} \smid p \in K, \nu \in \mathrm{N}_{p}^{\ast} K \right\}.
\end{equation}
The outward normal cone $\mathrm{N}_{p}^{\ast} K$ is a \emph{pointed convex cone}:  it contains $0$, is convex, and $s_{1} \nu_{1} + s_{2} \nu_{2} \in \mathrm{N}_{p}^{\ast} K$ for all $s_{1}, s_{2} \geq 0$ and all $\nu_{1}, \nu_{2} \in \mathrm{N}_{p}^{\ast} K$.  Also, $\mathrm{N}_{p}^{\ast} K = \{ 0 \}$ if $p$ is an interior point of $K$.  Note that $\mathrm{N}^{\ast} K \subseteq \mathcal{X} \times \mathcal{X}^{\ast}$ is not necessarily a convex set.  See figure \ref{fig:normal_cone} for an illustration.

\begin{figure}
	\noindent\framebox[\linewidth]{\parbox{\linewidth-20pt}{
		\begin{center}
			\scalebox{1} 
			{
				\begin{pspicture}(0,-2.1)(6.0,2.1)
	\definecolor{medgray}{rgb}{0.6,0.6,0.6}
	\definecolor{lightgray}{rgb}{0.9,0.9,0.9}
	\pspolygon[linecolor=medgray,fillstyle=solid,fillcolor=medgray](6.0,1.95)(6.0,0.95)(4.0,-0.05)(4.5,1.95)
	\psline[linecolor=medgray,fillcolor=medgray](2.0,0.45)(2.4,2.05)
	\pspolygon[fillstyle=solid,fillcolor=lightgray](0.0,0.95)(0.0,-2.05)(5.0,-2.05)(4.0,-0.05)
	\psdots[dotsize=0 4](2.0,0.45)
	\psdots[dotsize=0 4](4.0,-0.05)
	\rput(1,-1.25){$K$}
	\rput(1.88,0.16){$p$}
	\rput(3.78,-0.24){$q$}
	\rput(1.7,1.5){$\mathrm{N}_{p}^{\ast} K$}
	\rput(5.25,1.5){$\mathrm{N}_{q}^{\ast} K$}
	\psdots[dotsize=0 4](2.5,-1)
	\rput(2.5,-1.25){$r$}
\end{pspicture} 
			}
		\end{center}
		\caption{A convex set $K$ and its outward normal cones at points $p, q, r \in K$.  $\partial K$ is smooth at $p \in \partial K$, so $\mathrm{N}_{p}^{\ast} K$ is a half-line;  $\partial K$ has a vertex at $q$, so $\mathrm{N}_{q}^{\ast} K$ is a pointed convex cone with non-empty interior;  at the interior point $r$, $\mathrm{N}_{r}^{\ast} K$ is the empty set.}
		\label{fig:normal_cone}
	}}
\end{figure}

\subsection{Quasiconvexity}  

If $K \subseteq \mathcal{X}$ is a convex set, then a function $f \colon K \to \R \cup \{ \pm \infty \}$ is said to be \emph{quasiconvex} if, for every $\theta \in \R \cup \{ \pm \infty \}$, the sublevel set
\begin{equation}
	\label{eq:sublevel}
	f^{-1}([-\infty, \theta]) := \{ x \in K \mid - \infty \leq f(x) \leq \theta \}
\end{equation}
is a convex set;  equivalently, $f$ is quasiconvex if, for all $x, y \in K$ and $t \in [0, 1]$,
\begin{equation}
	\label{eq:equivalent_quasiconvex}
	f((1 - t)x + t y) \leq \max \{ f(x), f(y) \}.
\end{equation}
$f$ is said to be \emph{quasiconcave} if $-f$ is quasiconvex, and $f$ is said to be \emph{quasilinear} if it is both quasiconvex and quasiconcave.  Every convex (resp.\ concave, linear) function is quasiconvex (resp.\ quasiconcave, quasilinear), but not \emph{vice versa}.  In particular, a function $f \colon \R^{N} \to \R$ is quasilinear if, and only if, it is the composition of a monotone function with a linear functional on $\R^{N}$ \cite[p.\ 122]{BoydVandenberghe:2004}.

\subsection{Indicator and Characteristic Functions.}  

Given a set $A \subseteq \mathcal{X}$, $\one_{A}$ and $\chi_{A}$ denote its \emph{indicator function} and \emph{characteristic function} respectively:
\begin{equation}
	\label{eq:indicator_fn}
	\one_{A}(x) :=
	\begin{cases}
		1, & \text{if $x \in A$,} \\
		0, & \text{if $x \notin A$;}
	\end{cases}
\end{equation}
\begin{equation}
	\label{eq:characteristic_fn}
	\chi_{A}(x) :=
	\begin{cases}
		0, & \text{if $x \in A$,} \\
		+\infty, & \text{if $x \notin A$.}
	\end{cases}
\end{equation}
Note that, for any convex set $K \subseteq \mathcal{X}$, $\chi_{K}$ is a convex function.

\subsection{Probabilistic Notions}  

Let $(\Omega, \mathscr{F}, \P)$ be a probability space and let $X \colon \Omega \to \mathcal{X}$ be an $\mathcal{X}$-valued random variable.  $\Exp[\cdot]$ denotes the expectation operator with respect to the probability measure $\P$:  $\Exp[X]$ is defined to be any $m \in \mathcal{X}$ such that
\begin{equation}
	\label{eq:expectation_vector}
	\Exp[\langle \ell, X - m \rangle] \equiv \int_{\Omega} \langle \ell, X(\omega) - m \rangle \, \d \P (\omega) = 0 \text{ for all } \ell \in \mathcal{X}^{\ast};
\end{equation}
if $\mathcal{X}^{\ast}$ separates the points of $\mathcal{X}$ (\emph{e.g.}\ if $\mathcal{X}$ is a Banach space), then $\Exp[X]$ is unique.  For $Y \colon \Omega \to \R$, any $m \in \R$ that satisfies
\begin{equation}
	\label{eq:median}
	\sup \left\{ v \in \R \smid \P[Y \leq v] \leq \frac{1}{2} \right\} \leq m \leq \inf \left\{ v \in \R \smid \P[Y \leq v] \geq \frac{1}{2} \right\}
\end{equation}
will be called a \emph{median} of $Y$ and denoted $\Med[Y]$.  $M_{X} \colon \mathcal{X}^{\ast} \to [0, + \infty]$ denotes the \emph{moment-generating function} defined by
\begin{equation}
	\label{eq:MGF}
	M_{X}(\ell) := \Exp \left[ \exp \langle \ell, X \rangle \right] \text{ for all } \ell \in \mathcal{X}^{\ast}.
\end{equation}
$\Lambda_{X} \colon \mathcal{X}^{\ast} \to \R \cup \{ \pm \infty \}$ denotes the \emph{cumulant generating function} (or \emph{logarithmic moment-generating function}) defined by
\begin{equation}
	\label{eq:logMGF}
	\Lambda_{X}(\ell) := \log M_{X}(\ell) = \log \Exp \left[ \exp \langle \ell, X \rangle \right] \text{ for all } \ell \in \mathcal{X}^{\ast}.
\end{equation}
By H{\"o}lder's inequality, $\Lambda_{X}$ is a convex function.

\subsection{Talagrand's Inequalities}

It has been known for some time that convex sets and functions enjoy good concentration properties;  moreover, to get good concentration results, it is necessary to measure distances in the right way.

For example, a theorem of Talagrand shows that if a convex set $K \subseteq \mathbb{R}^{N}$ occupies a ``significant'' portion of the Hamming cube $\{ -1, +1 \}^{N}$ and $t \gg 1$, then ``nearly all'' of the points of the Hamming cube lie within Euclidean distance $t$ of $K$.  Define the \emph{Euclidean Hausdorff distance} from $x \in \R^{N}$ to $A \subseteq \R^{N}$ by
\begin{equation}
	\label{eq:Hausdorff_distance}
	d_{\mathrm{Haus}}(x, A) := \inf \{ \| x - a \|_{2} \mid a \in A \}.
\end{equation}
Talagrand \cite{Talagrand:1988} showed that if $X$ is uniformly distributed in $\{ -1, +1 \}^{N}$ then, for any $A \subseteq \R^{N}$, $\Exp[\exp(d_{\mathrm{Haus}}(X, \CCH(A))^{2} / 8)] \leq \P[X \in A]^{-1}$;  hence, Chebyshev's inequality implies that, for any $t \geq 0$,
\begin{equation}
	\label{eq:Talagrand-1}
	\P[X \in A] \P [d_{\mathrm{Haus}}(X, \CCH(A)) \geq t] \leq \exp \left( - \frac{t^{2}}{8} \right).
\end{equation}

More interesting results can be obtained if one uses not the Euclidean distance but the Hamming distance --- or, more accurately, an infimum over weighted Hamming distances.  For $w = (w_{1}, \dots, w_{N}) \in [0, + \infty)^{N}$, define the \emph{$w$-weighted Hamming distance} $d_{w}$ on a product of sets $\mathcal{X} = \prod_{n = 1}^{N} \mathcal{X}_{n}$ by
\begin{equation}
	\label{eq:w_Hamming_distance}
	d_{w}(x, y) := \sum_{n = 1}^{N} w_{n} (1 - \delta_{x_{n}, y_{n}});
\end{equation}
that is, $d_{w}(x, y)$ is the $w$-weighted sum of the number of components in which $x, y \in \mathcal{X}$ differ.  For $x \in \mathcal{X}$ and $A \subseteq \mathcal{X}$, set $d_{w}(x, A) := \inf_{a \in A} d_{w}(x, a)$.  Define \emph{Talagrand's convex distance} from $x \in \mathcal{X}$ to $A \subseteq \mathcal{X}$ by
\begin{equation}
	\label{eq:Talagrand_distance}
	d_{\mathrm{Tal}}(x, A) := \sup \left\{ d_{w}(x, A) \smid w \in [0, + \infty)^{N}, \sum_{n = 1}^{N} w_{n}^{2} = 1 \right\},
\end{equation}
and, for $A, B \subseteq \mathcal{X}$, let $d_{\mathrm{Tal}}(A, B) := \inf_{a \in A} d_{\mathrm{Tal}}(a, B)$.  Talagrand \cite[\S4.1]{Talagrand:1995} showed that if $X = (X_{1}, \dots, X_{N})$ is any $\mathcal{X}$-valued random variable with independent components, then
\begin{equation}
	\label{eq:Talagrand-3}
	\P[X \in A] \P[X \in B] \leq \exp \left( - \frac{d_{\mathrm{Tal}}(A, B)^{2}}{4} \right).
\end{equation}

These bounds on the probabilities of sets lead to deviation inequalities for convex Lipschitz functions.  For example (\emph{cf.}\ \cite{JohnsonSchechtman:1991} \cite{Talagrand:1988}), let $X$ be any random variable in the unit cube in $\R^{N}$ with independent components, and let $f \colon [0, 1]^{N} \to \R$ be convex and Lipschitz with $\| f \|_{\mathrm{Lip}} \leq 1$;  then, for any $t \geq 0$,
\begin{equation}
	\label{eq:Talagrand-2}
	\P[f(X) \geq \Med[f(X)] + t] \leq 2 \exp \left( - \frac{t^{2}}{4} \right).
\end{equation}	

Note, however, that these results use not only the convexity of the function of interest, but also require Lipschitz continuity.  What concentration inequalities can be shown to hold without smoothness assumptions?

\subsection{McDiarmid's Inequality}

One smoothness-free concentration inequality is \emph{McDiarmid's inequality} \cite{McDiarmid:1989}, also known as the \emph{bounded differences inequality}, which itself generalizes an earlier inequality of Hoeffding \cite{Hoeffding:1963}.  McDiarmid's inequality is by no means the strongest concentration-of-measure inequality in the literature, but is useful because of its simple hypotheses and proof.

Define the \emph{McDiarmid diameter} of $f$, denoted $\mathcal{D}[f]$, by
\begin{equation}
	\label{eq:McD_diameter}
	\mathcal{D}[f] := \left( \sum_{n = 1}^{N} \mathcal{D}_{n}[f]^{2} \right)^{1/2},
\end{equation}
where the $n^{\mathrm{th}}$ \emph{McDiarmid subdiameter} $\mathcal{D}_{n}[f]$ is defined by
\begin{equation}
	\label{eq:McD_subdiameter}
	\mathcal{D}_{n}[f] := \sup \{ | f(x) - f(y) | \mid x_{j} = y_{j} \text{ for } j \neq n \}.
\end{equation}
When $\Exp[|f(X)|]$ is finite and $X_{1}, \dots, X_{N}$ are independent, McDiarmid's inequality bounds the deviations of $f(X)$ from $\Exp[f(X)]$ in terms of the McDiarmid diameter of $f$:  for any $r > 0$,
\begin{subequations}
	\label{eq:McD}
	\begin{align}
		\label{eq:McD-leq}
		\P[f(X) - \Exp[f(X)] \leq - r] & \leq \exp \left( - \frac{2 r^{2}}{\mathcal{D}[f]^{2}} \right), \\
		\label{eq:McD-geq}
		\P[f(X) - \Exp[f(X)] \geq r] & \leq \exp \left( - \frac{2 r^{2}}{\mathcal{D}[f]^{2}} \right).
	\end{align}
\end{subequations}
McDiarmid's inequality implies that, for any $\theta \in \R \cup \{ \pm \infty \}$,
\begin{subequations}
	\label{eq:McD2}
	\begin{align}
		\label{eq:McD2-leq}
		\P[f(X) \leq \theta] & \leq \exp \left( - \frac{2 (\Exp[f(X)] - \theta)_{+}^{2}}{\mathcal{D}[f]^{2}} \right), \\
		\label{eq:McD2-geq}
		\P[f(X) \geq \theta] & \leq \exp \left( - \frac{2 (\theta - \Exp[f(X)])_{+}^{2}}{\mathcal{D}[f]^{2}} \right).
	\end{align}
\end{subequations}
McDiarmid's inequality (and similar inequalities such as martingale inequalities) have the advantage that a bound on the tails of $f(X)$ is obtained solely in terms of the mean output $\Exp[f(X)]$ and the McDiarmid diameter $\mathcal{D}[f]$.  However, McDiarmid's inequality cannot take advantage of any other properties of $f$ such as convexity or monotonicity;  furthermore, if $f$ has infinite McDiarmid diameter on the essential range of $X$, then the trivial upper bound $1$ is obtained.

There are many other sources of concentration-of-measure inequalities:  these include logarithmic Sobolev inequalities and the Herbst argument \cite{BakryEmery:1985} \cite{Gross:1975} \cite{HolleyStroock:1987}, the entropy method \cite{BobkovLedoux:1998} \cite{BoucheronLugosiMassart:2003} \cite{Ledoux:1996}, and information-theoretic methods \cite{Dembo:1997} \cite{Marton:1996}.  Of particular interest are those concentration results that apply to infinite-dimensional settings \cite{LedouxTalagrand:1991}.

\section{Normal Distance}
\label{sec:normal_distance}

As noted above, efficient presentation of many concentration-of-measure inequalities relies on having an appropriate notion of function variation (\emph{e.g.}\ the Lipschitz norm or McDiarmid diameter) or distance (\emph{e.g.}\ Talagrand's convex distance).  The inequalities that will be established in section \ref{sec:main} can be phrased in terms of transforms of moment-generating functions, but are more transparent if phrased in terms of a \emph{normal distance}, which will introduced in this section.

Fix a function $\Psi \colon \mathcal{X}^{\ast} \to [0, + \infty]$ that is positively homogeneous of degree one, \emph{i.e.}\ such that $\Psi(\alpha \ell) = \alpha \Psi(\ell)$ for all $\alpha \geq 0$ and all $\ell \in \mathcal{X}^{\ast}$.  By analogy with the situation in finite-dimensional Euclidean space, in which $\Psi = \| \cdot \|_{2}$ on $(\R^{N})^{\ast}$, define the distance from a point $x \in \mathcal{X}$ to a half-space $\H_{p, \nu} \subseteq \mathcal{X}$ by
\begin{equation}
	\label{eq:normal_distance_halfspace}
	d_{\perp, \Psi}(x, \H_{p, \nu}) := \frac{\langle \nu, x - p \rangle_{+}}{\Psi(\nu)},
\end{equation}
with the convention that $0 / 0 = 0$, since the distance from $x \in \mathcal{X}$ to the trivial half-space $\H_{p, \nu} = \mathcal{X}$ ought to be zero.  Note that $d_{\perp, \Psi}(x, \H_{p, \nu}) = 0$ whenever $x \in \H_{p, \nu}$;  note also that the homogeneity assumption on $\Psi$ ensures that \eqref{eq:normal_distance_halfspace} is an unambiguous definition.   We now generalize \eqref{eq:normal_distance_halfspace} to more general subsets of $\mathcal{X}$ than half-spaces.  The heuristic is that the distance from $x$ to $A \subseteq \mathcal{X}$ should be the greatest possible distance (in the sense of \eqref{eq:normal_distance_halfspace}) from $x$ to any half-space that contains $A$;  the existence of the degenerate half-space $\H_{p, 0}$ ensures that the normal distance is zero if there are no proper half-spaces that contain $A$.

\begin{defn}
	\label{defn:normal_distance}
	Let $x \in \mathcal{X}$ and $A \subseteq \mathcal{X}$.  The \emph{$\Psi$-normal distance} from $x$ to $A$, denoted $d_{\perp, \Psi}(x, A)$, is defined (with the same convention that $0 / 0 = 0$) by
	\begin{equation}
		\label{eq:normal_distance}
		d_{\perp, \Psi}(x, A) := \sup \left\{ \frac{\langle \nu, x - p \rangle_{+}}{\Psi(\nu)} \smid \begin{matrix} p \in \mathcal{X} \text{ and } \nu \in \mathcal{X}^{\ast} \\ \text{such that } A \subseteq \H_{p, \nu} \end{matrix} \right\}.
	\end{equation}
	The $\Psi$-normal distance from $A \subseteq \mathcal{X}$ to $B \subseteq \mathcal{X}$ is defined by $d_{\perp, \Psi}(A, B) := \inf_{a \in A} d_{\perp, \Psi}(a, B)$.  In the special case $\mathcal{X} = \R^{N}$ and $\Psi = \| \cdot \|_{2}$ on $(\R^{N})^{\ast}$, we shall simply write $d_{\perp}$ for $d_{\perp, \Psi}$, \emph{i.e.}
	\begin{equation}
		\label{eq:normal_distance_Euclidean}
		d_{\perp}(x, A) := \sup \left\{ \frac{(\nu \cdot (x - p))_{+}}{\| \nu \|_{2}} \smid \begin{matrix} p \in \R^{N} \text{ and } \nu \in (\R^{N})^{\ast} \\ \text{such that } A \subseteq \H_{p, \nu} \end{matrix} \right\}.		
	\end{equation}
\end{defn}

Note well that the definition of the normal distance $d_{\perp, \Psi}(x, A)$ does not require $\mathcal{X}$ to be normed;  even when $\mathcal{X}$ is equipped with a norm $\| \cdot \|_{\mathcal{X}}$ and $\Psi$ is the corresponding operator norm, the normal distance $d_{\perp, \Psi}(x, A)$ is not the same as the Hausdorff distance from $x$ to $A$ defined by
\begin{equation}
	\label{eq:Hausdorff_distance_X}
	d_{\mathrm{Haus}}(x, A) := \inf \{ \| x - a \|_{\mathcal{X}} \mid a \in A \};
\end{equation}
see figure \ref{fig:normal_distance} for an illustration.  Note also that it is not generally true that $d_{\perp, \Psi}(A, B) = d_{\perp, \Psi}(B, A)$:  consider \emph{e.g.}\ $B := \{ (0, 1) \}$ and $A$ as in figure \ref{fig:normal_distance}, in which case
\[
	d_{\perp, \Psi}(A, B) = \inf_{a \in A} d_{\perp, \Psi}(a, B) = 1 \neq 0 = d_{\perp, \Psi}(B, A).
\]

\begin{figure}
	\noindent\framebox[\linewidth]{\parbox{\linewidth-20pt}{
		\begin{center}
			\scalebox{1} 
			{
				\begin{pspicture}(-4,0)(4,4.5)
	\definecolor{medgray}{rgb}{0.6,0.6,0.6}
	\definecolor{lightgray}{rgb}{0.9,0.9,0.9}
	\psccurve[fillstyle=solid,fillcolor=lightgray](-3.5,3)(-2.5,1)(-1,3)(0,2)(1,3)(2.5,1)(3.5,3)(0,4)
	\rput(2.5,2.5){$A$}
	\psaxes[labels=none]{->}(0,0)(-3.9,0)(3.9,4.5)
	\rput(0,-0.25){$0$}
	\psarc[linestyle=dotted](0,0){2}{0}{180}
	\psline{<->}(0,0)(1,1.73205)
	\rput[l](0.5,0.5){$d_{\mathrm{Haus}}(0, A)$}
	\psline[linestyle=dashed](-3.9,1)(3.9,1)
	\psline{<->}(3.2,0)(3.2,1)
	\rput[l](3.45,0.5){$d_{\perp}(0, A)$}
\end{pspicture}
			}
		\end{center}
		\caption{An example of a subset $A$ of the Euclidean plane $\R^{2}$ for which the normal distance $d_{\perp}(0, A) = 1$ unit (\emph{cf.}\ the dashed line), as opposed to the Euclidean Hausdorff distance $d_{\mathrm{Haus}}(0, A) = 2$ units (\emph{cf.}\ the dotted arc).}
		\label{fig:normal_distance}
	}}
\end{figure}

For any $x \in \mathcal{X}$ and $A \subseteq B \subseteq \mathcal{X}$, it holds that $d_{\perp, \Psi}(x, B) \leq d_{\perp, \Psi}(x, A)$.  Furthermore, since a closed half-space $\H_{p, \nu}$ contains $A$ if, and only if, it contains the closed convex hull $\CCH(A)$ of $A$, the following equality holds:
\begin{equation}
	\label{eq:normal_distance_convex_hull}
	d_{\perp, \Psi}(x, A) = d_{\perp, \Psi}(x, \CCH(A)) \text{ for all } x \in \mathcal{X} \text{ and all } A \subseteq \mathcal{X}.
\end{equation}

\begin{rmk}
	It is natural to ask what, if any, relation there is between the normal distance and Talagrand's convex distance.  The simplest answer is to say that the two distances should be compared only with great caution, since each belongs to a different setting:  Talagrand's distance is defined on a product of sets, whereas the normal distance is defined on a topological vector space.  Even on $\R^{N}$, the two distances measure different quantities:  in some sense, $d_{\mathrm{Tal}}(x, A)$ measures how many of the coordinates of $x$ are covered by $A$, but does not measure the geometric distance between them;  on the other hand, $d_{\perp, \Psi}(x, A)$ is a much more geometric measure of how far $x$ is from $A$ in terms of linear functionals on $\mathcal{X}$, and the ``size'' of those linear functionals is measured by $\Psi$.  In particular, Talagrand's convex distance is positively homogeneous of degree zero, whereas the normal distance is positively homogeneous of degree one:  for any $x \in \R^{N}$, $A \subseteq \R^{N}$, and $\alpha > 0$,
	\[
		d_{\mathrm{Tal}}(\alpha x, \alpha A) = d_{\mathrm{Tal}}(x, A),
	\]
	\[
		d_{\perp, \Psi}(\alpha x, \alpha A) = \alpha d_{\perp, \Psi}(x, A).
	\]
\end{rmk}

This section concludes with the proof of the portmanteau theorem (theorem \ref{thm:portmanteau}) and some final remarks on its applicability:

\begin{proof}[Proof of theorem \ref{thm:portmanteau}.]
	The equivalence will be established by showing that
	\[
		\text{(\ref{port-1})} \implies \text{(\ref{port-2})} \implies \text{(\ref{port-3})} \implies \text{(\ref{port-4})} \implies \text{(\ref{port-5})} \implies \text{(\ref{port-1})}.
	\]

	Suppose that (\ref{port-1}) holds.  Then
	\begin{align*}
		& \P[X \in K] \\
		& \quad \leq \inf_{\H_{p, \nu} \supseteq K} \P[X \in \H_{p, \nu}] & & \text{by monotonicity of $\P$,} \\
		& \quad \leq \inf_{\H_{p, \nu} \supseteq K} \exp \left( - \frac{d_{\perp, \Psi}(\Exp[X], \H_{p, \nu})^{2}}{2} \right) & & \text{by (\ref{port-1}),} \\
		& \quad = \exp \left( - \frac{1}{2} \sup_{\H_{p, \nu} \supseteq K} d_{\perp, \Psi}(\Exp[X], \H_{p, \nu})^{2} \right) & & \\
		& \quad = \exp \left( - \frac{d_{\perp, \Psi}(\Exp[X], K)^{2}}{2} \right) & & \text{by \eqref{eq:normal_distance}.}
	\end{align*}
	Hence, (\ref{port-1}) implies (\ref{port-2}).
	
	Suppose that (\ref{port-2}) holds;  then
	\begin{align*}
		\P[X \in A]
		& \leq \P[X \in \CCH(A)] & & \text{since $A \subseteq \CCH(A)$,} \\
		& \leq \exp \left( - \frac{d_{\perp, \Psi}(\Exp[X], \CCH(A))^{2}}{2} \right) & & \text{by (\ref{port-2}),} \\
		& = \exp \left( - \frac{d_{\perp, \Psi}(\Exp[X], A)^{2}}{2} \right) & & \text{by \eqref{eq:normal_distance_convex_hull},}
	\end{align*}
	and so (\ref{port-2}) implies (\ref{port-3}).  (\ref{port-4}) follows from (\ref{port-3}) upon setting $A := \{ x \in \mathcal{X} \mid f(x) \leq \theta \}$.  (\ref{port-5}) is clearly a special case of (\ref{port-4}).  (\ref{port-1}) follows from (\ref{port-5}) upon setting $f := \chi_{\H_{p, \nu}}$ and $\theta := 1$.
\end{proof}

\begin{rmk}
	It is important to note that all the bounds in theorem \ref{thm:portmanteau} may be trivial if the dual space $\mathcal{X}^{\ast}$ is not rich enough.  For example, given a measure space $(\mathcal{Z}, \mathscr{F}, \mu)$, for $0 < p < 1$, the space
	\[
		\mathcal{L}^{p}(\mathcal{Z}, \mathscr{F}, \mu; \R) := \left\{ f \colon \mathcal{Z} \to \R \smid \| f \|_{p} := \left( \int_{\mathcal{Z}} | f(z) |^{p} \, \d \mu(z) \right)^{1/p} < + \infty \right\}
	\]
	is a topological vector space with respect to the quasinorm topology generated by $\| \cdot \|_{p}$.  This space is not locally convex and has a trivial dual space:  the only continuous linear functional on this space is the zero functional, and so the only closed half-space is the whole space.  See \emph{e.g.}\ \cite[\S1.47]{Rudin:1991} for further discussion of spaces such as $\mathcal{L}^{p}([0, 1]; \R)$ for $0 < p < 1$.
	
	It is tempting to eliminate these pathologies by working with the algebraic, instead of the topological, dual of $\mathcal{X}$.  This can be done, and most results go through \emph{mutatis mutandis};  in particular, it is necessary to replace all references to the closed convex hull $\CCH(A)$ of $A \subseteq \mathcal{X}$ with the convex hull $\CH(A)$;  the analogue of \eqref{eq:normal_distance_convex_hull} (with $\Psi$ now defined on the algebraic dual of $\mathcal{X}$) is
	\[
		d_{\perp, \Psi}(x, A) = d_{\perp, \Psi}(x, \CH(A)) \text{ for all } x \in \mathcal{X} \text{ and all } A \subseteq \mathcal{X}.
	\]
	The principal disadvantage of ignoring all topological structure on $\mathcal{X}$, of course, is that there are no longer notions of interior, closure and frontier --- although it still makes sense to discuss the extremal points of convex sets.
\end{rmk}

\section{Normal Distance as a Concentration Rate}
\label{sec:main}

The method of Chernoff bounding (reviewed in lemma \ref{lem:Chernoff}) gives bounds on $\P[X \in \H_{p, \nu}]$ in terms of the moment-generating function $M_{X}$.  If these bounds can be formulated in terms of a suitable normal distance, then theorem \ref{thm:portmanteau} produces equivalent bounds for on $\P[X \in K]$ for convex $K$, on $\P[X \in A]$, \emph{\& c.}.  As noted in \cite[\S2]{Lugosi:2009}, the best Chernoff bound on $\P[f(X) \geq \theta]$ is never better than the best bound using the all the moments of $f(X)$:  if $f$ takes only non-negative values, then
\begin{equation}
	\label{eq:moments_vs_Chernoff}
	\inf_{k \in \N} \theta^{-k} \Exp \big[ f(X)^{k} \big] \leq \inf_{s \geq 0} e^{-s \theta} \Exp \big[ e^{s f(X)} \big].
\end{equation}
However, Chernoff bounds have the advantage that they are geometrically very easy to handle.

The next result provides the normal distance formulation for an $\mathcal{X}$-valued Gaussian random variable (in fact, for a family of such variables).  In the special case of a single Gaussian random vector $X$ on $\mathcal{X} = \R^{N}$ with covariance operator $C_{X} = \sigma \mathbb{I}_{N}$, proposition \ref{prop:Gaussian} yields the classical Chernoff bound for a multivariate normal random variable.

\begin{prop}
	\label{prop:Gaussian}
	Let $\Gamma$ be a family of Gaussian random vectors in $\mathcal{X}$.  For each $X \in \Gamma$, let $C_{X} \colon \mathcal{X}^{\ast} \to \mathcal{X}^{\ast \ast}$ be its covariance operator defined by
	\begin{equation}
		\label{eq:covariance}
		\langle C_{X} \ell, \nu \rangle := \Exp \left[ \langle \ell, X \rangle \langle \nu, X \rangle \right].
	\end{equation}
	Let $E := \{ \Exp[X] \mid X \in \Gamma \}$, let
	\begin{equation}
		\label{eq:Gaussian_Psi}
		\Psi(\nu) := \sup_{X \in \Gamma} \sqrt{\langle C_{X} \nu, \nu \rangle},
	\end{equation}
	and let $d_{\perp, \Psi}$ be the corresponding normal distance.  Then, for any $A \subseteq \mathcal{X}$,
	\begin{equation}
		\label{eq:Gaussian_bound}
		\sup_{X \in \Gamma} \P[X \in A] \leq \exp \left( - \frac{d_{\perp, \Psi}(E, A)^{2}}{2} \right).
	\end{equation}
\end{prop}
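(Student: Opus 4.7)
The plan is to reduce the statement to a uniform verification of condition~(\ref{port-1}) of theorem~\ref{thm:portmanteau} for each $X \in \Gamma$, with the \emph{same} normalizing function $\Psi$ from \eqref{eq:Gaussian_Psi}. Once (\ref{port-1}) holds for each individual $X$, the equivalence (\ref{port-1}) $\Longleftrightarrow$ (\ref{port-3}) in the portmanteau theorem gives a pointwise estimate on $\P[X \in A]$ in terms of $d_{\perp,\Psi}(\Exp[X], A)$, and taking a supremum over $X \in \Gamma$ together with the definition of the set-to-set normal distance produces \eqref{eq:Gaussian_bound}.

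For the core step, I would fix $X \in \Gamma$ with mean $m_X := \Exp[X]$ and pick an arbitrary half-space $\H_{p,\nu}$. The scalar random variable $\langle \nu, X\rangle$ is Gaussian with mean $\langle \nu, m_X\rangle$ and variance $\sigma_X^2 := \langle C_X \nu, \nu\rangle$, so a one-line Chernoff calculation based on $\Exp[\exp(s\langle \nu, m_X - X\rangle)] = \exp(s^2 \sigma_X^2/2)$, optimized over $s \geq 0$, yields the classical Gaussian tail bound
\[
\P[X \in \H_{p,\nu}] \;=\; \P[\langle \nu, m_X - X\rangle \geq \langle \nu, m_X - p\rangle] \;\leq\; \exp\!\left(-\frac{\langle \nu, m_X - p\rangle_+^2}{2\,\langle C_X \nu, \nu\rangle}\right),
\]
with the cases $m_X \in \H_{p,\nu}$ and $\sigma_X = 0$ handled trivially by the conventions in effect. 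Since $\langle C_X \nu, \nu\rangle \leq \Psi(\nu)^2$ by \eqref{eq:Gaussian_Psi} and the right-hand side above is monotone in $\sigma_X^2$, this weakens to
\[
\P[X \in \H_{p,\nu}] \;\leq\; \exp\!\left(-\frac{1}{2}\,\frac{\langle \nu, m_X - p\rangle_+^2}{\Psi(\nu)^2}\right) \;=\; \exp\!\left(-\frac{d_{\perp,\Psi}(m_X, \H_{p,\nu})^2}{2}\right),
\]
using \eqref{eq:normal_distance_halfspace}. This is exactly hypothesis (\ref{port-1}) of theorem~\ref{thm:portmanteau} applied to $X$, so the implication (\ref{port-1}) $\Longrightarrow$ (\ref{port-3}) delivers the bound $\P[X \in A] \leq \exp(-d_{\perp,\Psi}(m_X, A)^2/2)$ for every measurable $A$.

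Finally, I would take the supremum over $X \in \Gamma$ on both sides and use
\[
\sup_{X \in \Gamma} \exp\!\left(-\frac{d_{\perp,\Psi}(m_X, A)^2}{2}\right) \;=\; \exp\!\left(-\frac{\inf_{X \in \Gamma} d_{\perp,\Psi}(m_X, A)^2}{2}\right) \;=\; \exp\!\left(-\frac{d_{\perp,\Psi}(E, A)^2}{2}\right),
\]
where the last equality is the definition $d_{\perp,\Psi}(E, A) = \inf_{e \in E} d_{\perp,\Psi}(e, A)$ together with $E = \{m_X : X \in \Gamma\}$. The only non-routine aspect is the uniformity step: a single $\Psi$ must control the half-space tails of every member of $\Gamma$ simultaneously. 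This is precisely why $\Psi$ is defined as the \emph{supremum} of $\sqrt{\langle C_X \nu, \nu\rangle}$ over $\Gamma$, and the monotonicity of the Gaussian tail in the variance does the rest; no compactness or measurability assumption on $\Gamma$ is needed, since the estimate is established one $X$ at a time before the supremum is taken.
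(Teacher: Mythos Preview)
Your proposal is correct and follows essentially the same route as the paper's own proof: verify the half-space bound (\ref{port-1}) for each fixed $X \in \Gamma$ via the Gaussian Chernoff/MGF calculation, weaken the variance $\langle C_X \nu, \nu\rangle$ to $\Psi(\nu)^2$ using \eqref{eq:Gaussian_Psi}, invoke theorem~\ref{thm:portmanteau} to pass from half-spaces to arbitrary $A$, and then take the supremum over $\Gamma$ and rewrite via $d_{\perp,\Psi}(E,A) = \inf_{e \in E} d_{\perp,\Psi}(e,A)$. Your treatment is slightly more explicit about the degenerate cases and the monotonicity in the variance, but the argument is the same.
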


\begin{proof}
	For each $X \in \Gamma$, the moment-generating function for $X$ is given by
	\begin{equation}
		\label{eq:Gaussian_MGF_bound}
		M_{X}(\ell) := \Exp \left[ e^{\langle \ell, X \rangle} \right] = \exp \left( \langle \ell, \Exp[X] \rangle + \frac{\langle C_{X} \ell, \ell \rangle}{2} \right).
	\end{equation}
	Therefore,
	\begin{align*}
		& \P[X \in \H_{p, \nu}] \\
		& \quad \leq \inf_{s \geq 0} \exp \left( s \langle \nu, p - \Exp[X] \rangle + s^{2} \frac{\langle C_{X} \nu, \nu \rangle}{2} \right) & & \text{by \eqref{eq:Gaussian_MGF_bound} and lemma \ref{lem:Chernoff},} \\
		& \quad = \exp \left( - \frac{\langle \nu, \Exp[X] - p \rangle_{+}^{2}}{2 \langle C_{X} \nu, \nu \rangle^{2}} \right) & & \\
		& \quad \leq \exp \left( - \frac{\langle \nu, \Exp[X] - p \rangle_{+}^{2}}{2 \Psi(\nu)^{2}} \right) & & \text{by \eqref{eq:Gaussian_Psi},} \\
		& \quad = \exp \left( - \frac{d_{\perp, \Psi}(\Exp[X], \H_{p, \nu})^{2}}{2} \right) & & \text{by \eqref{eq:normal_distance}.}
	\end{align*}
	Hence, by theorem \ref{thm:portmanteau},
	\[
		\P[X \in A] \leq \exp \left( - \frac{d_{\perp, \Psi}(\Exp[X], A)^{2}}{2} \right),
	\]
	and so
	\begin{align*}
		\sup_{X \in \Gamma} \P[X \in A]
		& \leq \sup_{X \in \Gamma} \exp \left( - \frac{d_{\perp, \Psi}(\Exp[X], A)^{2}}{2} \right) \\
		& = \exp \left( - \inf_{X \in \Gamma} \frac{d_{\perp, \Psi}(\Exp[X], A)^{2}}{2} \right) \\
		& = \exp \left( \frac{d_{\perp, \Psi}(E, A)^{2}}{2} \right). \qedhere
	\end{align*}
\end{proof}

Lemma \ref{lem:Chernoff} also has the following consequences for random vectors supported in a cuboid in $\R^{N}$;  this encompasses two standard situations in which concentration is often studied, namely concentration for functions on the Euclidean unit cube and on the Hamming cube.

\begin{prop}
	\label{prop:cuboid}
	Let $X$ be a random vector in $\R^{N}$ with independent components such that each component $X_{n}$ almost surely takes values in a fixed interval of length $L_{n}$.  Let
	\begin{equation}
		\label{eq:cuboid_Psi}
		\Psi(\nu) := \frac{1}{2} \sqrt{ \sum_{n = 1}^{N} L_{n}^{2} \nu_{n}^{2} }
	\end{equation}
	and let $d_{\perp, \Psi}$ be the corresponding normal distance.  Then, for any $A \subseteq \R^{N}$,
	\begin{equation}
		\label{eq:cuboid_bound}
		\P[X \in A] \leq \exp \left( - \frac{d_{\perp, \Psi}(\Exp[X], A)^{2}}{2} \right).
	\end{equation}
	\emph{A fortiori}, if $X$ takes values in (a translate of) the unit cube $[0, 1]^{N}$, then
	\begin{equation}
		\label{eq:unit_cube_bound}
		\P[X \in A] \leq \exp \left( - 2 d_{\perp}(\Exp[X], A)^{2} \right),
	\end{equation}
	and if $X$ takes values in (a translate of) the Hamming cube $\{ -1, +1 \}^{N}$, then
	\begin{equation}
		\label{eq:hamming_cube_bound}
		\P[X \in A] \leq \exp \left( - \frac{d_{\perp}(\Exp[X], A)^{2}}{2} \right).
	\end{equation}	
\end{prop}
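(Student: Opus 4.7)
The plan is to run the same Chernoff-bound strategy used for Proposition \ref{prop:Gaussian}, but with the Gaussian MGF replaced by Hoeffding's bound on the MGF of a bounded random variable. Concretely, I will first establish a sub-Gaussian control on $M_{X - \Exp[X]}$, then invoke Lemma \ref{lem:Chernoff} to get the half-space bound with rate $\Psi$, and finally appeal to Theorem \ref{thm:portmanteau} to upgrade to arbitrary measurable $A$.

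First I would recall Hoeffding's lemma: if $Y$ is a real random variable with $\Exp[Y] = 0$ supported almost surely in an interval of length $L$, then $\Exp[e^{sY}] \leq \exp(s^2 L^2 / 8)$ for every $s \in \R$. Applying this coordinate-wise to the independent, centered components of $X - \Exp[X]$, for any $\nu \in (\R^N)^{\ast}$ and $s \geq 0$,
\[
	\Exp \bigl[ e^{s \langle \nu, X - \Exp[X] \rangle} \bigr] = \prod_{n = 1}^{N} \Exp \bigl[ e^{s \nu_{n} (X_{n} - \Exp[X_{n}])} \bigr] \leq \exp \left( \frac{s^{2}}{8} \sum_{n = 1}^{N} L_{n}^{2} \nu_{n}^{2} \right) = \exp \bigl( s^{2} \Psi(\nu)^{2} / 2 \bigr).
\]
This is the sub-Gaussian surrogate for $\langle \nu, X \rangle$ with variance proxy $\Psi(\nu)^{2}$.

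Next, I would plug this into Lemma \ref{lem:Chernoff} exactly as in the proof of Proposition \ref{prop:Gaussian}. For any closed half-space $\H_{p, \nu}$,
\begin{align*}
	\P[X \in \H_{p, \nu}] & \leq \inf_{s \geq 0} e^{s \langle \nu, p - \Exp[X] \rangle} \Exp \bigl[ e^{s \langle \nu, X - \Exp[X] \rangle} \bigr] \\
	& \leq \inf_{s \geq 0} \exp \left( s \langle \nu, p - \Exp[X] \rangle + \frac{s^{2} \Psi(\nu)^{2}}{2} \right) \\
	& = \exp \left( - \frac{\langle \nu, \Exp[X] - p \rangle_{+}^{2}}{2 \Psi(\nu)^{2}} \right) = \exp \left( - \frac{d_{\perp, \Psi}(\Exp[X], \H_{p, \nu})^{2}}{2} \right),
\end{align*}
where the last equality is \eqref{eq:normal_distance_halfspace}. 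This is statement (\ref{port-1}) of Theorem \ref{thm:portmanteau}, so the portmanteau equivalence (\ref{port-1}) $\Leftrightarrow$ (\ref{port-3}) immediately yields \eqref{eq:cuboid_bound} for every measurable $A \subseteq \R^{N}$.

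The two corollaries then follow by computing $\Psi$ in each case. For the unit cube all $L_{n} = 1$, so $\Psi(\nu) = \tfrac{1}{2} \| \nu \|_{2}$; thus $d_{\perp, \Psi} = 2 d_{\perp}$ and $\exp(- d_{\perp, \Psi}^{2} / 2) = \exp(- 2 d_{\perp}^{2})$, giving \eqref{eq:unit_cube_bound}. For the Hamming cube all $L_{n} = 2$, so $\Psi(\nu) = \| \nu \|_{2}$, whence $d_{\perp, \Psi} = d_{\perp}$ and \eqref{eq:hamming_cube_bound} is immediate. The only substantive input beyond bookkeeping is Hoeffding's lemma, which is classical; once the sub-Gaussian MGF estimate is in hand, everything else is a verbatim copy of the Gaussian argument, so there is no real obstacle. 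The one thing to be careful about is the degenerate case $\Psi(\nu) = 0$ (which forces $\nu = 0$ when all $L_{n} > 0$, and otherwise makes the corresponding coordinate deterministic), handled by the $0/0 = 0$ convention in Definition \ref{defn:normal_distance}.
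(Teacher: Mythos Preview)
Your proof is correct and follows exactly the approach sketched in the paper: Hoeffding's lemma for the coordinate-wise MGF bound, Lemma~\ref{lem:Chernoff} for the half-space estimate, and Theorem~\ref{thm:portmanteau} to pass to general~$A$. The paper also notes (and you do not, though it is not needed) that the half-space bound can alternatively be obtained by applying McDiarmid's inequality directly to the linear functional $\langle \nu, \cdot \rangle$.
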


\begin{proof}
	The proof is similar to the Gaussian case:  it is an application of lemma \ref{lem:Chernoff} and Hoeffding's lemma \cite[lemma 1 and (4.16)]{Hoeffding:1963}, which bounds the moment-generating function of $X_{n}$ as follows:
	\[
		M_{X_{n}}(\ell_{n}) := \Exp \left[ \exp(\ell_{n} X_{n}) \right] \leq \exp \left( \ell_{n} \Exp[X_{n}] + \frac{\ell_{n}^{2} L_{n}^{2}}{8} \right).
	\]
	
	Note that the claim can also be proved directly by applying McDiarmid's inequality to the function $\langle \nu, \cdot \rangle$, which has mean $\Exp[ \langle \nu, X \rangle] = \langle \nu, \Exp[X] \rangle$ and McDiarmid diameter $\sqrt{L_{1}^{2} + \dots + L_{N}^{2}}$.
\end{proof}

\begin{rmk}
	Note the similarity between the normal distances of propositions \ref{prop:Gaussian} and \ref{prop:cuboid}.  In the Gaussian case, the norm on $\mathcal{X}^{\ast}$ is the one induced by the ``largest'' covariance operator in the family of random variables $\Gamma$.  In the bounded-range case, the norm on $\mathcal{X}^{\ast}$ is the one induced by the ``largest'' covariance operator for random variables satisfying the range constraint:  if $X$ is a real-valued random variable taking values in an interval $[a, b]$, then $\Psi(\nu)^{2} = \frac{1}{4} (b - a)^{2} \nu^{2}$ and $\mathrm{Var}[X] \leq \frac{1}{4} (b - a)^{2}$; this upper bound on the variance is attained by a Bernoulli random variable with law $\frac{1}{2} \delta_{a} + \frac{1}{2} \delta_{b}$.
\end{rmk}

The next result identifies the normal distance that corresponds to the concentration of a vector, the entries of which are the empirical (sampled) means of functions of independent random variables.

\begin{prop}
	\label{prop:emp_mean}
	For $n = 1, \dots, N$, let $Z_{n} := f_{n}(Y_{n, 1}, \dots, Y_{n, K(n)})$ be a real-valued function of independent random variables $Y_{n, k}$, and suppose that $f_{n}$ has finite McDiarmid diameter $\mathcal{D}[f_{n}]$.  Let $Z = (Z_{1}, \dots, Z_{N})$.  Suppose that the random inputs of each $f_{n}$ are sampled independently $M(n)$ times according to the distribution $\P$ and that the empirical average
	\begin{equation}
		\label{eq:emp_mean}
		\widehat{\Exp}[Z] = \left( \frac{1}{M(n)} \sum_{m = 1}^{M(n)} f_n \left( Y_{n, 1}^{(m)}, \dots, Y_{n, K(n)}^{(m)} \right) \right)_{n = 1}^{N} \in \R^{N}
	\end{equation}
	is formed.  Then, for any $A \subseteq \R^{N}$,
	\begin{equation}
		\label{eq:emp_mean_bound}
		\P \left[ \widehat{\Exp}[Z] \in A \right] \leq \exp \left( - \frac{d_{\perp, \Psi}(\Exp[Z], A)^{2}}{2} \right),
	\end{equation}
	where the distance $\Psi \colon (\R^{N})^{\ast} \to [0, +\infty)$ is given in terms of the McDiarmid diameters of the functions $f_{1}, \dots, f_{N}$ and the sample sizes $M(1), \dots, M(N)$:
	\begin{equation}
		\label{eq:emp_mean_Psi}
		\Psi(\nu) := \frac{1}{2} \left( \sum_{n = 1}^{N} \frac{\nu_{n}^{2} \mathcal{D}[f_{n}]^{2}}{M(n)} \right)^{1/2}.
	\end{equation}
\end{prop}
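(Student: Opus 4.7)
The plan is to apply Theorem \ref{thm:portmanteau} with $X := \widehat{\Exp}[Z]$: it suffices to verify the half-space bound (\ref{port-1}) with the $\Psi$ given by \eqref{eq:emp_mean_Psi}, since the portmanteau theorem then delivers \eqref{eq:emp_mean_bound} for all measurable $A \subseteq \R^{N}$ at once.

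Fix $p \in \R^{N}$ and $\nu \in (\R^{N})^{\ast}$. The event $\widehat{\Exp}[Z] \in \H_{p, \nu}$ is exactly $\{ g \leq \langle \nu, p \rangle \}$, where
\[
	g := \langle \nu, \widehat{\Exp}[Z] \rangle = \sum_{n = 1}^{N} \frac{\nu_{n}}{M(n)} \sum_{m = 1}^{M(n)} f_{n}\bigl( Y_{n, 1}^{(m)}, \dots, Y_{n, K(n)}^{(m)} \bigr)
\]
is a real-valued function of the full family of independent samples $\{ Y_{n, k}^{(m)} \}$. Since $\widehat{\Exp}[Z]$ is an unbiased estimator of $\Exp[Z]$, one has $\Exp[g] = \langle \nu, \Exp[Z] \rangle$. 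I would then compute the McDiarmid diameter of $g$: changing a single coordinate $Y_{n, k}^{(m)}$ perturbs $g$ by at most $|\nu_{n}|\, \mathcal{D}_{k}[f_{n}]/M(n)$, so
\[
	\mathcal{D}[g]^{2} = \sum_{n = 1}^{N} \sum_{m = 1}^{M(n)} \sum_{k = 1}^{K(n)} \frac{\nu_{n}^{2}\, \mathcal{D}_{k}[f_{n}]^{2}}{M(n)^{2}} = \sum_{n = 1}^{N} \frac{\nu_{n}^{2}\, \mathcal{D}[f_{n}]^{2}}{M(n)} = 4 \Psi(\nu)^{2},
\]
where the middle equality collapses the $M(n)$ identical copies and applies definition \eqref{eq:McD_diameter}.

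Plugging this diameter into McDiarmid's inequality in the form \eqref{eq:McD2-leq} applied to $g$ with threshold $\theta = \langle \nu, p \rangle$ yields
\[
	\P[\widehat{\Exp}[Z] \in \H_{p, \nu}] = \P[g \leq \langle \nu, p \rangle] \leq \exp \left( - \frac{2 \langle \nu, \Exp[Z] - p \rangle_{+}^{2}}{4 \Psi(\nu)^{2}} \right) = \exp \left( - \frac{d_{\perp, \Psi}(\Exp[Z], \H_{p, \nu})^{2}}{2} \right),
\]
where the last equality is just \eqref{eq:normal_distance_halfspace}. This verifies hypothesis (\ref{port-1}) of Theorem \ref{thm:portmanteau}, so hypothesis (\ref{port-3}) follows and yields \eqref{eq:emp_mean_bound}.

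The only real subtlety is the diameter bookkeeping in the second step: one must notice that the $M(n)$ independent copies of the $n^{\text{th}}$ block of inputs each contribute a subdiameter scaled by $1/M(n)$, so summing $M(n)$ squared subdiameters produces the factor $\mathcal{D}[f_{n}]^{2}/M(n)$ (not $\mathcal{D}[f_{n}]^{2}/M(n)^{2}$) and thus the $\sqrt{M(n)}$-type variance reduction characteristic of empirical averages. Everything else is a direct application of the portmanteau theorem and McDiarmid's inequality, so this is the only place where an error would be easy to make.
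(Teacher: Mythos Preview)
Your proposal is correct and follows essentially the same route as the paper: compute the McDiarmid diameter of $g = \langle \nu, \widehat{\Exp}[Z] \rangle$ as a function of the independent samples $\{Y_{n,k}^{(m)}\}$, apply McDiarmid's inequality \eqref{eq:McD2-leq} to obtain the half-space bound, and then invoke the portmanteau theorem. Your bookkeeping of the subdiameters and the $1/M(n)$ scaling is exactly what the paper does, with the same collapse $\sum_{m,k} \nu_n^2 D_{n,k}^2 / M(n)^2 = \nu_n^2 \mathcal{D}[f_n]^2 / M(n)$.
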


\begin{proof}
	Let $\H_{p, \nu} \subsetneq \R^{N}$ be a half-space.  Consider the real-valued random variable $\left\langle \nu, \widehat{\Exp}[Z] \right\rangle$ as a function of the sampled input random variables $Y_{n, k}^{(m)}$.  Suppose that the McDiarmid subdiameter of $f_{n}$ with respect to $Y_{n, k}$ is $D_{n, k}$.  Then the McDiarmid subdiameter of $\left\langle \nu, \widehat{\Exp}[Z] \right\rangle$ with respect to the $m^{\mathrm{th}}$ sample of $Y_{n, k}$ is $\nu_{n} D_{n, k} / M(n)$.  Hence, the McDiarmid diameter of $\left\langle \nu, \widehat{\Exp}[Z] \right\rangle$ is
	\[
		\sqrt{ \sum_{k, n, m} \frac{\nu_{n}^{2} D_{n, k}^{2}}{M(n)^{2}} } = \sqrt{ \sum_{n, m} \frac{\nu_{n}^{2} \mathcal{D}[f_{n}]^{2}}{M(n)^{2}} } = \sqrt{ \sum_{n} \frac{\nu_{n}^{2} \mathcal{D}[f_{n}]^{2}}{M(n)} }
	\]
	Therefore, since $\widehat{\Exp}[Z]$ is an unbiased estimator for $\Exp[Z]$ (\emph{i.e.}\ $\Exp \left[ \widehat{\Exp}[Z] \right] = \widehat{\Exp}[Z]$), McDiarmid's inequality \eqref{eq:McD2-leq} implies that
	\begin{align*}
		\P \left[ \widehat{\Exp}[Z] \in \H_{p, \nu} \right]
		& = \P \left[ \left\langle \nu, \widehat{\Exp}[Z] \right\rangle \leq \langle \nu, p \rangle \right] \\
		& \leq \exp \left( - \frac{2 \left( \langle \nu, \Exp[Z] \rangle - \langle \nu, p \rangle \right)_{+}^{2}}{ \sum_{n = 1}^{N} \frac{\nu_{n}^{2} \mathcal{D}[f_{n}]^{2}}{M(n)} } \right) \\
		& = \exp \left( - \frac{\langle \nu, \Exp[Z] - p \rangle_{+}^{2}}{2 \cdot \frac{1}{4} \cdot \sum_{n = 1}^{N} \frac{\nu_{n}^{2} \mathcal{D}[f_{n}]^{2}}{M(n)}} \right) \\
		& = \exp \left( - \frac{d_{\perp, \Psi}(\Exp[Z], \H_{p, \nu})^{2}}{2} \right).
	\end{align*}
	The claim now follows from theorem \ref{thm:portmanteau}.
\end{proof}

An example of the application of proposition \ref{prop:emp_mean} is the following:

\begin{eg}[Functions of empirical means]
	The Chernoff bounding method can be used to provide much-improved confidence levels for quantities derived from many empirical --- as opposed to exact --- means;  see \emph{e.g.}\ \cite[\S5]{SullivanTopcuMcKernsOwhadi:2010}.  Suppose that $H_{0} \colon \R^{N} \to \R$ is some function of interest:  in particular, the quantity of interest is $H_{0} \left( \Exp[Z_{1}], \dots, \Exp[Z_{N}] \right)$ for some absolutely integrable real-valued random variables $Z_{1}, \dots, Z_{N}$.  If, however, the exact means $\Exp[Z_{n}]$ are unknown, then empirical means $\widehat{\Exp}[Z_{n}]$ may be used in their place if appropriate confidence corrections are made.  Suppose that ``error'' corresponds to concluding, based on the empirical means, that $H_{0}(\Exp[Z])$ is smaller than it actually is.  Given $\alpha \in \R^{N}$, set
	\begin{equation}
		\label{eq:H_alpha}
		H_{\alpha}(z_{1}, \dots, z_{N}) := H_{0}(z_{1} + \alpha_{1}, \dots, z_{N} + \alpha_{N}).
	\end{equation}
	Therefore, given any $\eps > 0$, we seek an appropriate ``margin hit'' $\alpha = \alpha(\eps) \in \R^{N}$ (typically, $\alpha_{n} \geq 0$ for each $n \in \{ 1, \dots, N \}$) such that
	\[
		\P \left[ H_{\alpha} \left( \widehat{\Exp}[Z_{1}], \dots, \widehat{\Exp}[Z_{N}] \right) \geq H_{0} \left( \Exp[Z_{1}], \dots, \Exp[Z_{N}] \right) \right] \geq 1 - \eps.
	\]
	Dually, given $\alpha \in \R^{N}$, we seek a sharp upper bound on the probability of error, \emph{i.e.}\ on
	\[
		\P \left[ H_{\alpha} \left( \widehat{\Exp}[Z_{1}], \dots, \widehat{\Exp}[Z_{N}] \right) \leq H_{0} \left( \Exp[Z_{1}], \dots, \Exp[Z_{N}] \right) \right].
	\]
		
	If $H_{0}$ (and hence $H_{\alpha}$) is monotonic in each of its $N$ arguments and $Z_{1}, \dots, Z_{N}$ are independent, then the probability of non-error can be bounded from below as follows:
	\begin{align*}
		\P \left[ H_{\alpha} \left( \widehat{\Exp}[Z] \right) \leq H_{0}(\Exp[Z]) \right]
		& = \P \left[ H_{\alpha} \left( \widehat{\Exp}[Z] \right) \leq H_{\alpha}(\Exp[Z] - \alpha) \right] \\
		& \leq \prod_{n = 1}^{N} \P \left[ \widehat{\Exp}[Z_{n}] \leq \Exp[Z_{n}] - \alpha_{n} \right] \\
		& \leq 1 - \prod_{n = 1}^{N} \left( 1 - \exp \left( - \frac{- 2 M(n) (\alpha_{n})_{+}^{2}}{\mathcal{D}[f_{n}]^{2}} \right) \right).
	\end{align*}
	Unfortunately, when $N$ is large, the last line of this inequality is typically close to zero unless the sample sizes are very large, and so this bound is of limited use.  Geometrically, this is analogous to the fact that a high-dimensional orthant (product of half-lines) appears to be very narrow from the perspective of an observer at its vertex.  In contrast, half-spaces always fill a half of the observer's field of view.  To bound the probability of sublevel or superlevel sets using half-spaces requires $H_{\alpha}$ to have some convexity --- not monotonicity --- properties.
	
	If $H_{\alpha}$ is quasiconvex, then the bounds using normal distances can be applied to good effect, and yield estimates that actually perform better the larger $N$ is.  In particular, if $H_{\alpha}$ is both quasiconvex and differentiable, then the outward normal to its $t$-level set at some point $p$ is just any positive multiple of the derivative of $H_{\alpha}$ at $p$, and this yields the bound
	\begin{equation}
		\label{eq:pre_link_formula}
		\P \left[ H_{\alpha} \left( \widehat{\Exp}[Z] \right) \leq \theta \right] \leq \inf_{p : H_{\alpha}(p) \leq \theta} \exp \left( - \frac{2 \left( \sum_{n = 1}^{N} \partial_{n} H_{\alpha}(p) (\Exp[Z_{n}] - p_{n}) \right)_{+}^{2}}{ \sum_{n = 1}^{N} \frac{(\partial_{n} H_{\alpha}(p))^{2} \mathcal{D}[f_{n}]^{2}}{M(n)} } \right).
	\end{equation}
	In particular, taking $\theta = H_{0}(\Exp[Z]) = H_{\alpha}(\Exp[Z] - \alpha)$ and evaluating the exponential in \eqref{eq:pre_link_formula} at $p = \Exp[Z] - \alpha \in \R^{N}$ yields that
	\begin{equation}
		\label{eq:link_formula}
		\P \left[ H_{\alpha} \left( \widehat{\Exp}[Z] \right) \leq H_{0}(\Exp[Z]) \right] \leq \exp \left( - \frac{2 \left( \sum_{n = 1}^{N} \partial_{n} H_{\alpha}(p) \alpha_{n} \right)_{+}^{2}}{ \sum_{n = 1}^{N} \frac{(\partial_{n} H_{\alpha}(p))^{2} \mathcal{D}[f_{n}]^{2}}{M(n)} } \right).
	\end{equation}
	\eqref{eq:link_formula} is particularly useful since it links the margin hits $\alpha_{n}$, the sample sizes $M(n)$, and the maximum probability of error.  For example, given a desired level of confidence, margin hits $\alpha_{n}$, and a total number of samples $M \in \N$, one can choose sample sizes $M(1), \dots, M(N)$ that sum to $M$ and minimize the right-hand side of \eqref{eq:link_formula};  this yields an optimal distribution of sampling resources so as to ensure that $H_{\alpha} \left( \widehat{\Exp}[Z] \right) \geq H_{0}(\Exp[Z])$ with the desired level of confidence.
\end{eg}

\section{High-Dimensional Asymptotics}
\label{sec:asymptotics}

The topic of this section is the asymptotic sharpness of the bounds introduced above as the dimension of the space $\mathcal{X}$ becomes large.  We begin with a comparison of the McDiarmid and half-space bounds for a simple function:  a quadratic form on $\R^{N}$.

\begin{eg}[Comparison with McDiarmid's inequality]
	\label{eg:quadratic_form_comparison}
	The following example serves to illustrate how the half-space method can produce upper bounds on the measure of suitable sublevel sets that are superior to those offered by McDiarmid's inequality;  it also shows how this effect is more pronounced in higher-dimensional spaces.  Consider the following quadratic form $Q_{N}$ on $\R^{N}$:
	\begin{equation}
		\label{eq:quadratic_form}
		Q_{N}(x) := \tfrac{1}{2} \left\| x - \left( \tfrac{1}{2}, \dots, \tfrac{1}{2} \right) \right\|_{2}^{2}.
	\end{equation}
	For any $\theta > 0$, the sublevel set $Q_{N}^{-1}([-\infty, \theta])$ is simply a ball of radius $\sqrt{2 \theta}$ about the point $\left( \tfrac{1}{2}, \dots, \tfrac{1}{2} \right)$.  Suppose that a random vector $X$ takes values in $\left[ - \tfrac{1}{2}, + \tfrac{1}{2} \right]^{N}$ with independent components.  McDiarmid's inequality \eqref{eq:McD2-leq} implies that
	\[
		\P[Q_{N}(X) \leq \theta] \leq \exp \left( - 8 \left( \frac{\sqrt{N}}{6} - \frac{\theta}{\sqrt{N}} \right)_{+}^{2} \right),
	\]
	If also $\Exp[X] = 0$, then corollary \ref{prop:cuboid} implies that
	\[
		\P[Q_{N}(X) \leq \theta] \leq \exp \left( - \frac{( \sqrt{N} - \sqrt{8 \theta} )_{+}^{2}}{2} \right).
	\]
	For small $N$ and large $\theta$, McDiarmid's bound is the sharper of the two.  However, for small $\theta$ (and, notably, as $N \to \infty$ for any fixed $\theta$), the half-space bound is the sharper bound.  See figure \ref{fig:quadratic_form_comparison} for an illustration.

	\begin{figure}
		\noindent\framebox[\linewidth]{\parbox{\linewidth-20pt}{
			\begin{center}
				\scalebox{1} 
				{
					\begin{pspicture}(-1.5,-5.0)(8.5,0.5)
	\psaxes[dx=1,Dx=2]{->}(0,0)(7.75,-4.5)
	\rput(8.25,0.0){$N$}
	\rput{90}(-1.25,-2.5){$\log (\text{upper bound})$}
	\psline[linestyle=dotted, showpoints=true, dotstyle=triangle, dotsize=0 6](0.5,0.0)(1.0,0.0)(1.5,-0.050510)(2.0,-0.171573)(2.5,-0.337722)(3.0,-0.535898)(3.5,-0.758343)(4.0,-1.000000)(4.5,-1.257359)(5.0,-1.527864)(5.5,-1.809584)(6.0,-2.101021)(6.5,-2.400980)(7.0,-2.708497)
	\psline[linestyle=dotted, showpoints=true, dotstyle=square, dotsize=0 6](0.5,0.0)(1.0,-0.027778)(1.5,-0.166667)(2.0,-0.347222)(2.5,-0.544444)(3.0,-0.750000)(3.5,-0.960317)(4.0,-1.173611)(4.5,-1.388889)(5.0,-1.605556)(5.5,-1.823232)(6.0,-2.041667)(6.5,-2.260684)(7.0,-2.480159)
	\psline[showpoints=true, dotstyle=triangle*, dotsize=0 6](0.5,0.0)(1.0,-0.085786)(1.5,-0.267949)(2.0,-0.500000)(2.5,-0.763932)(3.0,-1.050510)(3.5,-1.354249)(4.0,-1.671573)(4.5,-2.000000)(5.0,-2.337722)(5.5,-2.683375)(6.0,-3.035898)(6.5,-3.394449)(7.0,-3.758343)
	\psline[showpoints=true, dotstyle=square*, dotsize=0 6](0.5,-0.013889)(1.0,-0.173611)(1.5,-0.375000)(2.0,-0.586806)(2.5,-0.802778)(3.0,-1.020833)(3.5,-1.240079)(4.0,-1.460069)(4.5,-1.680556)(5.0,-1.901389)(5.5,-2.122475)(6.0,-2.343750)(6.5,-2.565171)(7.0,-2.786706)
\end{pspicture}
				}
			\end{center}
			\caption{For the quadratic form $Q_{N}$ on $\R^{N}$ given in \eqref{eq:quadratic_form}, a comparison of the McDiarmid upper bound (squares) and half-space upper bound (triangles) on $\P[Q_{N}(X) \leq \theta]$ in the cases $\theta = \tfrac{1}{4}$ (dotted line and hollow polygons) and $\theta = \tfrac{1}{8}$ (solid line and filled polygons).}
			\label{fig:quadratic_form_comparison}
		}}
	\end{figure}
\end{eg}

The previous example suggests that bounds constructed using the half-space method may perform very well in high dimension but also that the sharpness of the bound may depend on ``how round'' the set whose measure we wish to bound is.  To fix ideas, suppose that $X = (X_{1}, \dots, X_{N}) \colon \Omega \to \R^{N}$ is a random vector with independent components, where $X_{n}$ is supported on an interval of length $L_{n}$.  For $A \subseteq \R^{N}$, how sharp is the bound
\begin{equation}
	\label{eq:bound_for_asymptotics}
	\P[X \in A] \leq \exp \left( - \frac{d_{\perp}(\Exp[X], A)^{2}}{2} \right)?
\end{equation}
First, note that since $d_{\perp}(\Exp[X], A) = d_{\perp}(\Exp[X], \CCH(A))$, the bound cannot be expected to be sharp if $A$ differs greatly from its closed convex hull, and so it makes sense to restrict investigation to the case that $A = K$, a closed and convex subset of $\R^{N}$.  Secondly, it is not reasonable to expect the bound \eqref{eq:bound_for_asymptotics} on $\P[X \in K]$ to be sharp if $K$ is sharply pointed, \emph{e.g.}\ if $K$ is the narrow wedge $K_{\eps}$ of angle $\eps \ll 1$ based at $e_{1} := (1, 0, \dots, 0)$ in $\R^{N}$:
\begin{equation}
	\label{eq:narrow_wedge}
	K_{\eps} := \left\{ x \in \R^{N} \smid \frac{(x - e_{1}) \cdot e_{1}}{\| x - e_{1} \|_{2}} \leq \eps \right\};
\end{equation}
see figure \ref{fig:narrow_wedge}.  Therefore, we wish to consider the opposite situation in which $K$ has no sharp points, which will be made precise by requiring that $K$ satisfy an interior ball condition.

\begin{figure}
	\noindent\framebox[\linewidth]{\parbox{\linewidth-20pt}{
		\begin{center}
			\scalebox{1} 
			{
				\begin{pspicture}(-0.5,-2)(6.5,2)
	\definecolor{medgray}{rgb}{0.6,0.6,0.6}
	\definecolor{lightgray}{rgb}{0.9,0.9,0.9}
	\psframe[dimen=outer,fillstyle=solid,fillcolor=medgray,linecolor=medgray](1,-1.6)(5.6,1.6)
	\psline(1,-1.6)(1,1.6)
	\rput(3,1.2){$\H_{e_{1}, -e_{1}}$}
	\pspolygon[dimen=outer,fillstyle=solid,fillcolor=lightgray,linecolor=lightgray](1,0)(5.6,-1)(5.6,1)
	\psline(5.6,1)(1,0)(5.6,-1)
	\rput(4.5,0.4){$K_{\eps}$}
	\psarc{<->}(1,0){4.2}{-12.5}{12.5}
	\rput[l](5.3,0.5){$\eps$}
	\psaxes[labels=none]{->}(0,0)(0,-1.9)(5.9,1.9)
	\psdot[dotsize=0 4](0,0)
	\rput[r](-0.2,0){$\Exp[X]$}
\end{pspicture}
			}
		\end{center}
		\caption{It is not reasonable to expect that (an upper bound for) the measure of the half-space $\H_{e_{1}, -e_{1}}$ is a sharp upper bound for the measure of the narrow wedge $K_{\eps}$ when $\eps$ is small.}
		\label{fig:narrow_wedge}
	}}
\end{figure}

Suppose that $(p, \nu) \in \mathrm{N}^{\ast} K$ is such that $d_{\perp}(x, \H_{p, \nu}) = d_{\perp}(x, K)$.  Suppose also that $\Ball_{r}(p - r \omega) \subseteq K$, with $r > 0$ and $\omega \in \R^{N}$ a unit vector, is an interior ball for $K$ at $p \in \partial K$;  \emph{cf.}\ figure \ref{fig:interior_ball}.  If the law of $X$ on $\R^{N}$ is highly singular, then it cannot be expected that the bound \eqref{eq:bound_for_asymptotics} is sharp, so suppose that the law of $X$ has a density with respect to Lebesgue measure that is bounded above by some constant $C > 0$.  Then the bound \eqref{eq:bound_for_asymptotics} is
\[
	\P[X \in K] \leq \exp \left( - \frac{2 \langle \nu, \Exp[X] - p \rangle_{+}^{2}}{\sum_{n = 1}^{N} \nu_{n}^{2} L_{n}^{2}} \right).
\]
In the extreme case, $K$ is precisely the closed ball $\overline{\Ball}_{r}(p - r \omega)$, the $\P$-measure of which is at most $C r^{N} \pi^{N / 2} / \Gamma (1 + N/2)$.

\begin{figure}
	\noindent\framebox[\linewidth]{\parbox{\linewidth-20pt}{
		\begin{center}
			\scalebox{1} 
			{
				\begin{pspicture}(-3,-0.5)(5,4.5)
	\definecolor{medgray}{rgb}{0.6,0.6,0.6}
	\definecolor{lightgray}{rgb}{0.9,0.9,0.9}
	\pspolygon[fillstyle=solid,fillcolor=medgray,linecolor=medgray](-2.5,2.1)(4.5,0.25)(4.5,4.2)(-2.5,4.2)
	\psline(-2.5,2.1)(4.5,0.25)
	\rput(4,1){$\H_{p, \nu}$}
	\psccurve[fillstyle=solid,fillcolor=lightgray](-2.2,3)(-1,1.8)(0,1.45)(1,1.3)(4.2,3)(1,4)
	\rput(2,3){$K$}
	\psframe[fillstyle=solid,fillcolor=medgray,linecolor=medgray](3.7,1.5)(4.4,4)
	\psline{cc-cc}(3.7,2.1)(3.7,3.68)
	\psdot[dotsize=0 4](3.7,3.68)
	\rput(3.95,3.68){$q$}
	\pscircle[fillstyle=solid,fillcolor=white](0.5,2.2){0.875}
	\psdot[dotsize=0 4](0.5,2.2)
	\rput(0.6,2.45){$p - r \omega$}
	\psline[linestyle=dotted](0.5,2.2)(0.28,1.37)
	\psdot[dotsize=0 4](0.28,1.37)
	\rput(0.28,1.12){$p$}
	\psaxes[labels=none]{->}(0,0)(-2.9,0)(4.9,4.5)
	\psdot[dotsize=0 4](0,0)
	\rput[t](0,-0.2){$\Exp[X]$}
\end{pspicture}
			}
		\end{center}
		\caption{An interior ball of radius $r$ for the closed convex set $K$ at the frontier point $p$.  Necessarily, $p$ is a point at which $\partial K$ is smooth;  $K$ admits no interior ball of positive radius at the vertex $q$.  For convenience, the unit vector $\omega \in \R^{N}$ has been identified with $\nu \in \mathrm{N}_{p}^{\ast} K \subseteq (\R^{N})^{\ast}$.}
		\label{fig:interior_ball}
	}}
\end{figure}
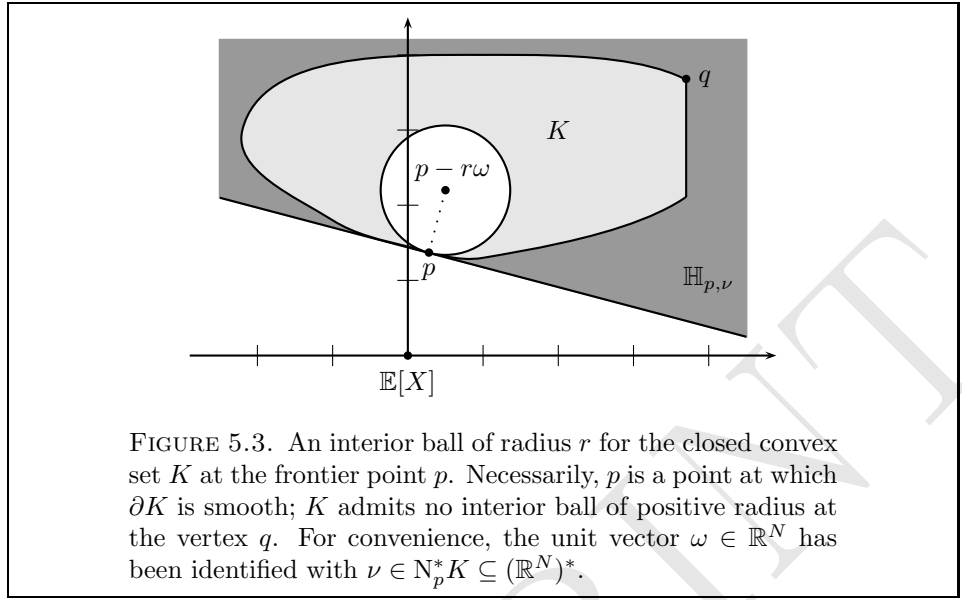

In large deviations theory, the standard notion of asymptotic sharpness is logarithmic equivalence \cite[\S{}I.1]{denHollander:2000};  see also \emph{e.g.}\ \cite{DemboZeitouni:1998} \cite{Varadhan:2008} for surveys of the large deviations literature.  Two sequences $(\alpha_{n})_{n \in \N}$ and $(\beta_{n})_{n \in \N}$ are said to be \emph{logarithmically equivalent}, denoted $\alpha_{n} \simeq \beta_{n}$, if
\begin{equation}
	\label{eq:logarithmic_equivalence}
	\frac{1}{n} \log \alpha_{n} - \frac{1}{n} \log \beta_{n} \equiv \log \left( \frac{\alpha_{n}}{\beta_{n}} \right)^{1/n} \to 0 \text{ as } n \to \infty.
\end{equation}
Are the half-space bound \eqref{eq:bound_for_asymptotics} and the measure of $\overline{\Ball}_{r}(p - r \omega)$ logarithmically equivalent?  That is, does the conditional probability ${ \P \left[ X \in \overline{\Ball}_{r}(p - r \omega) \smid X \in \H_{p, \nu} \right] }$, when raised to the power $\frac{1}{N}$, converge to $1$ as $N \to \infty$?  To simplify the asymptotic expansions below, in all lines after the first two, we shall take $\Exp[X] = 0$ and $L_{1} = \dots = L_{N} = 1$.  Then
\begin{align*}
	& \frac{1}{N} \log \P \left[X \in \overline{\Ball}_{r}(p - r \omega) \right] - \frac{1}{N} \log \left( \text{r.h.s.\ of \eqref{eq:bound_for_asymptotics}} \right)\\
	& \quad \leq \frac{1}{N} \left( \log \frac{C r^{N} \pi^{N / 2}}{\Gamma (1 + N/2)} + \frac{2 \langle \nu, \Exp[X] - p \rangle_{+}^{2}}{\sum_{n = 1}^{N} \nu_{n}^{2} L_{n}^{2}} \right) \\
	& \quad = \frac{2 \langle \nu, p \rangle_{-}^{2}}{N \| \nu \|_{2}^{2}} + \frac{\log (C r^{N} \pi^{N/2})}{N} - \frac{\log \Gamma (1 + N/2)}{N} \\
	\intertext{which, by Stirling's approximation for the Gamma function \cite[p.\ 256, eq.\ (6.1.37)]{AbramowitzStegun:1992}, is approximately}
	& \quad \approx \frac{2 \langle \nu, p \rangle_{-}^{2}}{N \| \nu \|_{2}^{2}} + \frac{\log (C r^{N} \pi^{N/2})}{N} - \frac{1}{N} \log \left( \sqrt{\frac{2 \pi}{1 + N/2}} \left( \frac{1 + N/2}{e} \right)^{1 + N/2} \right) \\
	& \quad \sim \frac{2 \langle \nu, p \rangle_{-}^{2}}{N \| \nu \|_{2}^{2}} + \frac{\log C}{N} - \frac{1}{2 N} \log \frac{4 \pi}{N} - \frac{1 + N/2}{N} \log \frac{N}{2 e} \\
	& \quad \sim \frac{2 \langle \nu, p \rangle_{-}^{2}}{N \| \nu \|_{2}^{2}} + \log r - \log \sqrt{N}
\end{align*}
Note that $\langle \nu, p \rangle_{-} / \| \nu \|_{2} \leq \sqrt{N} d_{\mathbf{1}}(0, p)$, where $d_{\mathbf{1}}$ denotes the weighted Hamming distance with weight $w = (1, \dots, 1)$.  Therefore, a necessary (but not sufficient) condition for the half-space bound to be asymptotically sharp in the sense of logarithmic equivalence is that $r$ is of the same order as $\sqrt{N}$.  That is, it is necessary that $K$ is sufficiently round that it has an interior ball of radius comparable to $\sqrt{N}$ at those frontier points where the normal distance $d_{\perp}(\Exp[X], K)$ is attained.

Now suppose that $K = f^{-1}([- \infty, \theta])$ is a convex sublevel set for twice-differentiable function $f$.  Let $\eta_{1}, \dots, \eta_{N - 1}, \nu$ be a basis of $\R^{N}$ such that
\[
	\| \eta_{1} \|_{2} = \dots = \| \eta_{N - 1} \|_{2} = \| \nu \|_{2} = 1
\]
and, for each $n \in \{ 1, \dots, N - 1 \}$, $\eta_{n}$ is perpendicular to $\nu$.  Suppose that, in this system of normal coordinates, near $p$, the frontier of $K$ can be approximated by a parabola:
\[
	\partial K = \left\{ y_{1} \eta_{1} + \dots y_{N - 1} \eta_{N - 1} - y_{N} \nu \smid y_{N} = \sum_{n = 1}^{N - 1} \lambda_{n} y_{n}^{2} \right\}
\]
with $\lambda_{1} \geq \lambda_{2} \geq \dots \geq \lambda_{N - 1} \geq 0$.  Then the condition that $K$ admits an interior ball of radius $r$ at $p$ is the inequality
\[
	r - \sqrt{r^{2} - \sum_{n = 1}^{N - 1} y_{n}^{2}} \geq \sum_{n = 1}^{N - 1} \lambda_{n} y_{n}^{2} \text{ whenever } \sum_{n = 1}^{N - 1} y_{n}^{2} \leq r^2.
\]
This, in turn, leads to the following condition on $\lambda_{1}$:  it must hold that $\lambda_{1} \leq \frac{1}{2 r}$.  Put another way, the half-space method cannot be expected to provide asymptotically sharp bounds for $\P[f(X) \leq \theta]$ if, when $f$ is approximated in normal coordinates near the closest point of $f^{-1}([- \infty, \theta])$ to $\Exp[X]$ by a non-negative quadratic form, that quadratic form has an eigenvalue greater than $(4 N)^{-1/2}$.

\section{Appendix:  Chernoff Bounds}
\label{sec:Chernoff}

The method of Chernoff bounds \cite[\S7.4.3]{BoydVandenberghe:2004} \cite{Chernoff:1952} is a simple one in which the probability of a subset of $\mathcal{X}$ is bounded by that of a containing half-space, and the probability of that half-space is bounded using the moment-generating function of the probability measure.

\begin{lem}[Chernoff bounds]
	\label{lem:Chernoff}
	For any half-space $\H_{p, \nu} \subseteq \mathcal{X}$,
	\begin{equation}
		\label{eq:Chernoff_halfspace}
		\P[X \in \H_{p, \nu}] \leq \inf_{s \geq 0} e^{s \langle \nu, p \rangle} M_{X} (- s \nu).
	\end{equation}
	For any convex set $K \subseteq \mathcal{X}$,
	\begin{subequations}
		\begin{align}
			\P[X \in K]
			\label{eq:Chernoff_convex-1}
			& \leq \inf_{(p, \nu) \in \mathrm{N}^{\ast} K} e^{\langle \nu, p \rangle} M_{X} (- \nu) \\
			\label{eq:Chernoff_convex-2}
			& = \exp \left( - \sup_{p \in K} (\Lambda_{X} + \chi_{-\mathrm{N}_{p}^{\ast} K})^{\star}(p) \right).
		\end{align}
	\end{subequations}
	In particular, for any $x \in \mathcal{X}$,
	\begin{equation}
		\label{eq:Chernoff_convex-3}
		\P[X = x] \leq \exp ( - \Lambda_{X}^{\star}(x) ).
	\end{equation}
\end{lem}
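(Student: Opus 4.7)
The plan is to proceed in three stages, matching the three parts of the statement, and relying on nothing more than Markov's inequality and the definitions of the normal cone, cumulant generating function, and Legendre--Fenchel transform.

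\textbf{Step 1: Half-space bound.} For a fixed $s \geq 0$, I would observe that
\[
    \P[X \in \H_{p, \nu}] = \P\bigl[\langle \nu, X \rangle \leq \langle \nu, p \rangle\bigr] = \P\bigl[e^{-s\langle \nu, X\rangle} \geq e^{-s\langle \nu, p\rangle}\bigr],
\]
where the second equality uses that $t \mapsto e^{-st}$ is non-increasing for $s \geq 0$. By Markov's inequality, this is at most $e^{s\langle \nu, p\rangle}\Exp[e^{\langle -s\nu, X\rangle}] = e^{s\langle \nu, p\rangle} M_{X}(-s\nu)$. Taking the infimum over $s \geq 0$ gives \eqref{eq:Chernoff_halfspace}. (Note the case $s = 0$ delivers the trivial bound $1$, so the infimum is well-defined.)

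\textbf{Step 2: Convex bound.} For $(p, \nu) \in \mathrm{N}^{\ast} K$, the definition \eqref{eq:normal_cone} gives $K \subseteq \H_{p, \nu}$, so $\P[X \in K] \leq \P[X \in \H_{p, \nu}]$. I would apply Step 1 with $s = 1$ (the $s$-parameter is now redundant because $\mathrm{N}_{p}^{\ast} K$ is a cone, so rescaling $\nu$ by $s \geq 0$ keeps the pair in $\mathrm{N}^{\ast} K$). Taking the infimum over $(p, \nu) \in \mathrm{N}^{\ast} K$ yields \eqref{eq:Chernoff_convex-1}.

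\textbf{Step 3: Legendre--Fenchel reformulation.} The equality \eqref{eq:Chernoff_convex-2} is just a rewriting of \eqref{eq:Chernoff_convex-1} using the definition of $\Lambda_{X}$, the characteristic function $\chi$ from \eqref{eq:characteristic_fn}, and the Legendre--Fenchel transform $g^{\star}(p) := \sup_{\nu} \{\langle \nu, p\rangle - g(\nu)\}$. Writing $e^{\langle \nu, p\rangle} M_{X}(-\nu) = \exp(\langle \nu, p\rangle + \Lambda_{X}(-\nu))$ and substituting $\mu := -\nu$, the infimum over $\nu \in \mathrm{N}_{p}^{\ast} K$ becomes
\[
    \inf_{\mu \in -\mathrm{N}_{p}^{\ast} K} \exp\bigl(-\langle \mu, p\rangle + \Lambda_{X}(\mu)\bigr) = \exp\Bigl(-\sup_{\mu \in \mathcal{X}^{\ast}} \bigl\{\langle \mu, p\rangle - \Lambda_{X}(\mu) - \chi_{-\mathrm{N}_{p}^{\ast} K}(\mu)\bigr\}\Bigr),
\]
which is exactly $\exp(-(\Lambda_{X} + \chi_{-\mathrm{N}_{p}^{\ast} K})^{\star}(p))$. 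Then the outer infimum over $p \in K$ turns into a supremum in the exponent via $\inf \exp(-\cdot) = \exp(-\sup(\cdot))$, producing \eqref{eq:Chernoff_convex-2}. Finally, \eqref{eq:Chernoff_convex-3} follows by specializing to the convex set $K = \{x\}$, for which $\mathrm{N}_{x}^{\ast}\{x\} = \mathcal{X}^{\ast}$, so $\chi_{-\mathrm{N}_{x}^{\ast}\{x\}} \equiv 0$ and $(\Lambda_{X} + 0)^{\star}(x) = \Lambda_{X}^{\star}(x)$.

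The proof is essentially bookkeeping; there is no real obstacle. The only subtlety worth flagging is the degenerate case $\nu = 0$, where $\H_{p, 0} = \mathcal{X}$ and both sides of \eqref{eq:Chernoff_halfspace} equal $1$ (taking $s = 0$ on the right), so nothing goes wrong.
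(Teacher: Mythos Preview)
Your proof is correct and follows essentially the same approach as the paper: Markov's inequality (equivalently, the bound $\one_{[Y\geq 0]}\leq e^{sY}$) for the half-space case, absorption of the parameter $s$ into the cone $\mathrm{N}_{p}^{\ast}K$ for the convex case, and the same substitution $\mu=-\nu$ to rewrite the infimum as a Legendre--Fenchel transform. Your explicit treatment of the singleton $K=\{x\}$ and the degenerate case $\nu=0$ are slightly more detailed than the paper's, but the arguments are otherwise identical.
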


\begin{proof}
	By the definition of the half-space $\H_{p, \nu}$,
	\begin{align*}
		\P \left[ X \in \H_{p, \nu} \right]
		& = \P \left[ \langle \nu, X \rangle \leq \langle \nu, p \rangle \right] \\
		& = \Exp \left[ \one_{[ \langle \nu, p - X \rangle \geq 0 ]} \right] \\
		& \leq \Exp \left[ e^{ s \langle \nu, p - X \rangle } \right] & & \text{for any $s \geq 0$,}\\
		& = e^{s \langle \nu, p \rangle} \Exp \left[ e^{\langle - s \nu, X \rangle} \right] \\
		& \leq e^{s \langle \nu, p \rangle} M_{X} (- s \nu).
	\end{align*}
	Since this inequality holds for any $s \geq 0$, taking the infimum over all such $s$ yields \eqref{eq:Chernoff_halfspace}.  Recall that the outward normal cone to a convex set at any point is closed under multiplication by non-negative scalars;  hence, for any convex set $K \subseteq \mathcal{X}$, taking the infimum of the right-hand side of \eqref{eq:Chernoff_halfspace} over half-spaces $\H_{p, \nu}$ that contain $K$ yields \eqref{eq:Chernoff_convex-1}.  Now observe that
	\begin{align*}
		& \inf_{(p, \nu) \in \mathrm{N}^{\ast} K} e^{\langle \nu, p \rangle} M_{X} (- \nu) \\
		& \quad = \inf_{(p, \nu) \in \mathrm{N}^{\ast} K} \exp ( \langle \nu, p \rangle + \Lambda_{X} (- \nu) ) \\
		& \quad = \exp \left( \inf_{p \in K} \inf_{\nu \in \mathrm{N}_{p}^{\ast} K} \left( \langle \nu, p \rangle + \Lambda_{X} (- \nu) \right) \right) \\
		& \quad = \exp \left( - \sup_{p \in K} \sup_{\nu \in - \mathrm{N}_{p}^{\ast} K} \left( \langle \nu, p \rangle - \Lambda_{X} (\nu) \right) \right) \\
		& \quad = \exp \left( - \sup_{p \in K} (\Lambda_{X} + \chi_{\vphantom{\tfrac{\sim}{\sim}} -\mathrm{N}_{p}^{\ast} K})^{\star}(p) \right),
	\end{align*}
	which establishes \eqref{eq:Chernoff_convex-2};  \eqref{eq:Chernoff_convex-3} follows as a special case.
\end{proof}

\bibliographystyle{amsplain}
\bibliography{./refs}

\end{document}